\newcommand{\ce}[1]{\ceil{#1}}
\newcommand{\fl}[1]{\floor{#1}}
\renewcommand{\hat}{\widehat}
\DeclarePairedDelimiter{\ceil}{\lceil}{\rceil}
\DeclareMathOperator{\divisor}{div}
\DeclareMathOperator{\Ann}{Ann}
\DeclareMathOperator{\Spec}{Spec}
\DeclareMathOperator{\Int}{Int}
\DeclareMathOperator{\Newt}{Newt}
\DeclareMathOperator{\Hom}{Hom}
\newcommand{\ms}[1]{\mathscr{#1}}
\title{Big Cohen-Macaulay Test Ideals on Mixed Characteristic Toric Schemes }
\author{Marcus Robinson}
\begin{document}
\maketitle



\begin{abstract}
We provide a formula to compute the big Cohen-Macaulay test ideal for triples $((R,\Delta),\mf{a}^{t})$ where $R$ is a mixed characteristic toric ring and $\mf{a}$ is a monomial ideal. Of particular interest is that this result is consistent with the formulas for test ideals in positive characteristic and multiplier ideals in characteristic zero. 
\end{abstract}

\section{Introduction}

In his proof of the direct summand conjecture in mixed characteristic, Andr\'e used the construction of a large integral perfectoid extension where an interesting element of the base ring has a set of compatible $p$th roots \cite{AND1}. Using the subsequent innovation of weakly functorial integral perfectoid big Cohen-Macaualy $R^{+}$-algebras \cite{AND3} Ma and Schwede define a mixed characteristic test/multiplier ideal object \cite{MS2} called the big Cohen-Macaulay test ideal.  

Let $R$ be an equal characteristic regular domain satisfying an additional geometric assumption (essentially of finite type, complete, F-finite if characteristic $p>0$). Let $\mf{a} \subseteq R$ be an ideal and $t \geq 0$. If $R$ is a $d$-dimensional local ring with maximal ideal $\mf{m}$ and $\pi:Y \rightarrow X = \Spec R$ is a log resolution of singularities of $(R, \mf{a})$ where $\mf{a} \cdot \mc{O}_{Y} = \mc{O}(-A)$, then by Matlis duality we can define the multiplier ideal
$$
\mc{J}(R, \mf{a}^{t}) = \Ann_{R}\left( \left(\ker H^{d}_{\mf{m}}(R) \rightarrow \mathbb{H}^{d}_{\mf{m}}(\bold{R}\pi_{*}\mc{O}_{Y}(\lfloor tA \rfloor))\right)\right). 
$$
In characteristic $p>0$ we define the test ideal by replacing the log resolution of singularities $Y \rightarrow \Spec R$ with $R \subseteq R^{1/p^{\infty}}$ up to perturbations by $c^{1/p^{k}}$ with $k \gg 0$ for some fixed $c$ called a test element. In particular
$$
\tau(R, \mf{a}^{t}) = \Ann_{R}\{\eta \in H^{d}_{\mf{m}} \ | \  0 = c^{1/p^{e}}(\mf{a}^{\lceil tp^{e} \rceil})^{1/p^{e}} \eta \in H^{d}_{\mf{m}}(R^{1/p^{e}}) \mbox{ for all } e \gg 0 \}
$$.
Using big Cohen-Macaulay $R^{+}$-algebras as the appropriate substitution for resolution of singularities, the big Cohen-Macaulay test deal with respect to $B$ is approximately defined as 
$$
\tau_{B}(R, \mf{a}^{t}) =  \Ann_{R} (( \ker H^{d}_{\mf{m}}(R) \xrightarrow{(\mf{a}^{\ceil{tn}})^{1/n}} H^{d}_{\mf{m}}(B) )). 
$$
Big Cohen-Macaualay test ideals have proved useful in many of the same applications that their equal characteristic analogous have including symbolic powers \cite{MS1} and singularities \cite{MS2}. 

Due to the complicated nature of the big Cohen-Macaulay test ideals it is difficult to compute even simple examples.  Our main result computes the big Cohen-Macaulay test ideal of a monomial ideal on a toric scheme over a ring of mixed characteristic.  

\begin{theorem}
Let $R = W[\sigma^{\vee} \cap M]$, $X = \Spec R$ a mixed characteristic toric scheme, $W$ a mixed characteristic DVR, $\mf{a}$ a monomial idea not containing $p$ and $B$ a sufficiently large big Cohen-Macaulay $R^{+}$-algebra. Let $(X, \Delta)$ be a pair with $\Delta$ a torus invariant $\mathbb{Q}$-divisor such that $K_{X} + \Delta$ is $\mathbb{Q}$-Cartier and torus invariant. Then there exists a monomial $x^{u}$  such that $\divisor x^{u}= r(K_{X} + \Delta)$ for some integer $r$. Setting $w = \frac{u}{r} $ we have that,
$$
\tau_{B}((R, \Delta), \mf{a}^{t}) = \{ x^{v} \in R \ | \ v-w  \in\Int(t\Newt(\mf{a}) \}. 
$$
\end{theorem}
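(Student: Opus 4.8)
The plan is to carry out, in the mixed characteristic big Cohen--Macaulay setting, the toric computation underlying Howald's theorem on multiplier ideals of monomial ideals and its positive characteristic analogue, with $B$ taking the place of a log resolution (respectively of $R^{1/p^{\infty}}$). Write $M$ for the character lattice, $n$ for its rank, $\sigma$ for the defining cone, $P=\Newt(\mf{a})\subseteq M_{\mathbb{R}}$ for the Newton polyhedron (a $\sigma^{\vee}$-upward closed polyhedron, since $p\notin\mf{a}$), and $d=\dim R$. I would first reduce to the case where $R$ is complete local at the torus fixed maximal ideal $\mf{m}$ --- the big Cohen--Macaulay test ideal is insensitive to this completion --- and to $\sigma$ full dimensional (so that $d=n+1$), as otherwise a torus factor splits off and both sides of the asserted equality transform compatibly. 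Since $\mf{a}$, $\Delta$ and $K_{X}$ are torus invariant one may take $B$ compatibly with the torus action, so that the whole construction of $\tau_{B}$ is $\mathbb{Z}^{n+1}$-graded; hence $\tau_{B}((R,\Delta),\mf{a}^{t})$ is a monomial ideal and it suffices to decide, for $v\in\sigma^{\vee}\cap M$, whether $x^{v}\in\tau_{B}$.

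Next I would unwind the definition. Let $\psi\colon H^{d}_{\mf{m}}(R)\to H^{d}_{\mf{m}}(B)$ be the map appearing in the definition of $\tau_{B}$, twisted by the $n$-th roots of generators of $\mf{a}^{\ce{tn}}$ and by (roots of) the monomial section $x^{u}$, where $\divisor x^{u}=r(K_{X}+\Delta)$; then $\tau_{B}((R,\Delta),\mf{a}^{t})=\Ann_{R}(\ker\psi)$. I would record the $\mathbb{Z}^{n+1}$-graded structure of $H^{d}_{\mf{m}}(R)$: its classes are spanned by $p^{-j}x^{v'}$ with $j\ge 1$ and $v'\in\Int(-\sigma^{\vee})\cap M$, with $R$ acting by monomial multiplication; since $p\notin\mf{a}$, the $p$-direction is a spectator throughout. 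Equivalently, Matlis duality over $(R,\mf{m})$ presents $\tau_{B}$ as a colon ideal $(N:_{R}\omega_{R})$, where $\omega_{R}\cong\mc{O}_{X}(K_{X})$ has torus support $\Int(\sigma^{\vee})\cap M$ and $N\subseteq\omega_{R}$ is the image of the Matlis dual of $\psi$; I would use whichever formulation is more convenient.

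The core step is the computation of $\ker\psi$ (equivalently of $N$) once $B$ is taken sufficiently large. The key input is the $M_{\mathbb{Q}}$-graded structure of $H^{d}_{\mf{m}}(B)$ for a sufficiently large perfectoid-type toric $R^{+}$-algebra: it is governed by the \emph{rational} points of $-\sigma^{\vee}$, a class $p^{-s}x^{c}$ vanishing precisely when $c\notin\Int(-\sigma^{\vee})$, and for $B$ large enough all the $n$-th roots of monomials and of $p$ that occur in $\psi$ lie in $B$. Granting this, $p^{-j}x^{v'}\in\ker\psi$ if and only if $v'+b$ leaves $\Int(-\sigma^{\vee})$ for every exponent $b$ of $(\mf{a}^{\ce{tn}})^{1/n}$ and all $n\gg 0$; the limit $\ce{tn}/n\to t$ upgrades the floor terms that control membership in $\mf{a}^{\ce{tn}}$ into the strict inequalities defining the topological interior of $tP$, while the twist by $x^{u}$ encodes the passage from $K_{X}$ to $K_{X}+\Delta$. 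This is precisely the toric dictionary behind Howald's theorem and its extension to pairs, now transported through $B$.

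Finally I would form the annihilator, equivalently the colon $(N:_{R}\omega_{R})$, using that $\omega_{R}$ has torus support $\Int(\sigma^{\vee})\cap M$ and that $K_{X}+\Delta$ is $\mathbb{Q}$-Cartier --- which is exactly what makes the rational vector $w=u/r$ exist and makes the colon collapse the several shifts (from $K_{X}$, from $\Delta$, and from $\omega_{R}$) into the single shift by $w$. One then checks that $x^{v}$ annihilates $\ker\psi$ if and only if $\langle\rho,v-w\rangle>t\,h_{P}(\rho)$ for every facet normal $\rho$ of $tP$, i.e.\ exactly when $v-w\in\Int(t\Newt(\mf{a}))$, which is the asserted formula. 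I expect the third step to be the main obstacle: controlling the local cohomology (equivalently the canonical module) of the large, non-Noetherian algebra $B$ --- proving that a sufficiently large big Cohen--Macaulay $R^{+}$-algebra really does realize the full rational Newton-polyhedron combinatorics --- together with the bookkeeping needed so that the floor functions coming from $\mf{a}^{\ce{tn}}$, the discrepancies of the monomial valuations, and the $\Delta$-twist all combine into the topological interior and the shift $w=u/r$ with the correct sign.
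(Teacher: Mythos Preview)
Your outline has the right combinatorial intuition but misses the two concrete devices the paper uses to make the argument go through, and in their absence the step you yourself flag as ``the main obstacle'' is a genuine gap.

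First, the paper never tries to describe $H^{d}_{\mf{m}}(B)$ for the big, non-Noetherian $B$. Instead it proves that $R_{n}=W[\sigma^{\vee}\cap\tfrac{1}{n}M]\to B$ is \emph{pure} for every $n$, by exhibiting a boundary $\Delta_{0}$ with $(R_{n},\Delta_{0})$ BCM$_{B}$-regular (this reduces, via \cite{MS2}, to strong $F$-regularity of the mod-$p$ pair, which is read off from the toric combinatorics). Purity gives the injection $H^{d}_{\mf{m}}(R_{n})\hookrightarrow H^{d}_{\mf{m}}(B)$, so the kernel of the twist by a \emph{monomial} $g^{1/m}f$ can be computed entirely inside the explicitly graded module $H^{d}_{\mf{m}}(R_{n})$. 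This yields the formula for $\tau_{B}(R,\Gamma)$ for monomial $\Gamma$ and hence, by summing over the products $f=\prod f_{j_{i}}^{1/n}$, the equality
\[
\tau_{B}((\hat R,\Delta),[\underline f]^{t})=\{x^{v}\mid v-w\in\Int(t\Newt(\mf a))\}.
\]
Your plan to assert directly that a class $p^{-s}x^{c}$ in $H^{d}_{\mf{m}}(B)$ vanishes iff $c\notin\Int(-\sigma^{\vee})$ is, after unwinding, exactly this purity statement; but you offer no argument for it, whereas the paper supplies one.

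Second, your direct kernel computation only addresses twists by monomials $\prod f_{j_{i}}^{1/n}$, i.e.\ the bracket object $\tau_{B}((R,\Delta),[\underline f]^{t})$. The actual $\tau_{B}((R,\Delta),\mf{a}^{t})$ involves $n$-th roots of \emph{sums} of monomials, and those twists do not land on monomial classes of any $R_{n}$, so the graded/injective picture says nothing about them. The paper closes this gap by a sandwich: it constructs a toric log resolution of $(R,\mf a)$ in mixed characteristic, invokes the general inequality $\tau_{B}\subseteq \mathcal{J}$ from \cite{MS2}, and checks that the mixed-characteristic multiplier ideal coincides with the characteristic-zero one, giving
\[
\tau_{B}([\underline f]^{t})\subseteq \tau_{B}(\mf a^{t})\subseteq \mathcal{J}(\mf a^{t})=\{x^{v}\mid v-w\in\Int(t\Newt(\mf a))\}.
\]
Your outline contains no substitute for this upper bound, and the sentence ``$\ce{tn}/n\to t$ upgrades the floor terms \ldots'' does not, by itself, control the images of the non-monomial twists in $H^{d}_{\mf{m}}(B)$.
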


\noindent This result is consistent with the multiplier ideal in characteristic zero and the traditional test ideal in characteristic $p>0$. \\

\noindent \textbf{Acknowledgments:} I would like to thank my advisor Karl Schwede for his insight and guidance. I would also like to thank Joaquin Moraga and Daniel Smolkin for valuable discussion. \\

\section{Background}

In this section review the necessary theory of Big Cohen-Macaulay test ideals and toric geometry. Throughout this paper $R$ will be a commutative ring with unity. For a local ring $(R,\mf{m})$ we will denote the absolute integral closure of $R$ inside of an algebraic closure of its fraction field by $R^{+}$. To define the big Cohen-Macaulay test ideal we will use the language of integral perfecoid algebras following the notation and conventions of \cite{MS2}.

\subsection{Big Cohen-Macaulay test ideals} 

In this section we will review the theory of big Cohen-Macaulay test ideals of pairs $(R, \Delta)$ developed in \cite{MS2}, a more complete exposition can also be found there. In section 3 we will extend these ideas into the settings of triples. For convenience we will often refer to the following setting. 

\begin{setting} \label{set} Let $(R, \mathfrak{m})$ be a complete normal local domain.
\begin{itemize}
    \item The divisor $\Gamma \geq 0$ will denote an effective $\mathbb{Q}$-Cartier divisor on $\Spec R$.
    \item The divisor $\Delta \geq 0$ will denote an effective $\mathbb{Q}$-divisor on $\Spec R$ such that $K_{R} + \Delta$ is $\mathbb{Q}$-Cartier. 
    \item If $\Delta \geq 0$ is defined, we fix an embedding $R \subseteq \omega_{R} \subseteq K(R)$ (hence we fix a canonical divisor $K_{R}$) such that $K_{R} + \Delta$ is effective. Since we are working locally, such a canonical divisor always exists. 
\end{itemize}
\end{setting}

\begin{defn}
Fix a big Cohen-Macaulay $R^{+}$-algebra B. With notation as above,  define
$$
0_{H^{d}_{\mf{m}}(R)}^{B,\Gamma} =  \ker \left( H^{d}_\mf{m}(R) \xrightarrow{f^{1/n}}H^{d}_\mf{m}(B) \right).
$$
where since $\Gamma$ is $\mathbb{Q}$-Cartier and effective there is some $f\in R$ such that $\divisor f = n\Gamma$ for some positive integer $n$. Moreover, if $R$ has mixed characteristic $(0,p)$, we define
$$
0_{H^{d}_{\mf{m}}(R)}^{\ms{B}, \Gamma} =\{ \eta \in 0_{H^{d}_{\mf{m}}(A)}^{B, \Gamma} \ | \ \mbox{$B$ an integral perfectoid big Cohen-Macaulay $R^{+}$-algebra} \} 
$$
We then define 
$$
\tau_{B}(\omega_{R}, \Gamma) = \Ann_{\omega_{R}}\left(0_{H^{d}_{\mf{m}}(R)}^{ B, \Gamma} \right),
$$
and in mixed characteristic
$$
\tau_{\mathscr{B}}(\omega_{R}, \Gamma) = \Ann_{\omega_{R}}\left(0_{H^{d}_{\mf{m}}(R)}^{ B, \Gamma} \right).
$$
We call $\tau_{B}(\omega_{R}, \Gamma)$ the BCM parameter test submodule of $(\omega_{R}, \Gamma)$ with respect to $B$ and $\tau_{\mathscr{B}}(\omega_{R}, \Gamma)$ the perfectoid BCM parameter test submodule of $(\omega_{R}, \Gamma)$
\end{defn}

\begin{remark} We record the following facts
\begin{itemize}
    \item The definition is independent of the choice of $f^{1/n}$. By working in an $R^{+}$-algebra we ensure that any choice of $f^{1/n}$ differs from another by a unit and so does not impact $0_{H^{d}_{\mf{m}}(R)}^{B,\Gamma}$ or $0_{H^{d}_{\mf{m}}(R)}^{\ms{B}, \Gamma}$.
    \item $\tau_{B}(\omega_{R}, K_{R} + \Delta)$ and $\tau_{\mathscr{B}}(\omega_{R}, K_{R} + \Delta)$ are independent of the choice of $K_{R}$ (\cite{MS2},Lemma 6.7)
    \item $\tau_{B}(\omega_{R}, K_{R} + \Delta)$ and $\tau_{\mathscr{B}}(\omega_{R}, K_{R} + \Delta)$ are ideals of $R$ (\cite{MS2}, Lemma 6.8).
\end{itemize}
\end{remark}

\begin{defn}
Based on the above remarks we define
$$
\tau_{B}(R, \Delta) = \tau_{B}(\omega_{R}, K_{R} + \Delta)
$$
and in mixed characteristic
$$
\tau_{\mathscr{B}}(R, \Delta) = \tau_{\mathscr{B}}(\omega_{R}, K_{R} + \Delta).
$$
We call $\tau_{B}(R, \Delta)$ the BCM test ideal of the pair $(R,\Delta)$ and $\tau_{\mathscr{B}}(R, \Delta)$ the perfectoid BCM test ideal of the pair $(R,\Delta)$. 
\end{defn}

\subsection{Toric setup} We record some notation and facts about mixed characteristic toric. For justification and references in the equal characteristic case \cite{FUL}. Fix a dual pair of lattices $N = M^{\vee} \cong \mathbb{Z}^{d}$.  Let 
$$\sigma = \{r_{1}u_{1} + \cdots + r_{t}u_{t} \ | \ r_{i} \in \mathbb{R}_{+}, u_{i} \in N \} \subset N_{\mathbb{R}} = N \otimes_{\mathbb{Z}} \mathbb{R}.$$
be a rational polyhedral cone. Throughout we will assume that $\sigma$ is strongly convex which means that it contains no positive dimensional subspace of $N_{\mathbb{R}}$. 

Let $\langle \cdot, \cdot \rangle$ be the pairing of the dual lattices $M$ and $N$. Then the dual cone, denoted  $\sigma^{\vee}$, is defined as 
$$
\{m \in M_{\mathbb{R}} \ | \ \langle m,v \rangle \geq 0 \mbox{ for all } v \in \sigma \}.
$$
 The lattice points of $\sigma^{\vee}$ describe a sub-semigroup of Laurent polynomials $k[x_{1}^{\pm 1 }, \ldots, x_{n}^{\pm 1}]$ by associating lattice points $\lambda = (\lambda_{1}, \ldots, \lambda_{n}) \in \sigma^{\vee}$ to monomials $x^{\lambda} = x_{1}^{\lambda_{1}} \cdots x_{n}^{\lambda_{n}}$. This identification defines a semigroup ring $R = W[\sigma^{\vee} \cap M]$ where $W$ is a mixed characteristic DVR such that $W/p$ is an algebraically closed field. When $\sigma^{\vee}$ is strongly convex, $R$ is normal and Cohen-Macaulay.  We will denote the associated mixed characteristic toric scheme as $X = \Spec R$.

Throughout we will study monomial ideals $\mf{a}$. An important geometric object connected to a monomial ideal in a toric ring is the Newton polyhedron denoted Newt$(\mf{a}) \subset M_{\mathbb{R}}$. The Newon polyhedron is defined as the convex hull of the set of lattice points $\lambda$ associated to the monomials $x^{\lambda}$ that are contained in $\mf{a}$.

Let $v_{1}, \ldots, v_{n}$ be the first lattice points on the edges of the cone $\sigma$. The orthogonals $v_{i}^{\perp} \cap \sigma^{\vee}$ define facets of $\sigma^{\vee}$, hence they define codimension one subschemes $D_{i}$ of $X$. In equal characteristic a divisor is called torus invariant if it is of the form $\sum a_{i}D_{i}$ with $a_{i} \in \mathbb{Z}$, for convenience we will keep this name in mixed characteristic even though there is no torus action. A divisor is $\mathbb{Q}$-Cartier if $D = \sum (w, v_{i}) D_{i}$ for some $w \in M_{\mathbb{Q}}$. There is a canonical choice of canonical divisor, namely $K_{X} = -\sum D_{i}$. 

\begin{lemma}
With $D_{i}$ defined as above, $-\sum D_{i}$ is a canonical divisor on $X = \Spec R = \Spec W[\sigma^{\vee} \cap M]$. 
\end{lemma}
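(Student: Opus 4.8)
\emph{Proof strategy.} The plan is to run the standard toric computation --- write down an explicit rational section of the canonical sheaf and read off its divisor --- while keeping track of the arithmetic direction, which is the only feature absent from the equal characteristic picture in \cite{FUL}.

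First I would pin down the canonical module. Since $R$ is normal and Cohen--Macaulay and $W$ is a one--dimensional Gorenstein ring with $\omega_{W}\cong W$, one identifies the canonical sheaf $\omega_{X}$ with the relative dualizing sheaf $\omega_{X/\Spec W}$; this is a reflexive $\mathcal O_{X}$--module of rank one, so a canonical divisor is precisely a Weil divisor $K_{X}$ with $\mathcal O_{X}(K_{X})\cong\omega_{X/\Spec W}$, and $\divisor(\theta)$ represents the canonical class for any nonzero rational section $\theta$. I would take
$$
\theta\;=\;\frac{dx_{1}}{x_{1}}\wedge\cdots\wedge\frac{dx_{d}}{x_{d}}.
$$
On the big torus $T=\Spec W[M]=\Spec W[x_{1}^{\pm1},\dots,x_{d}^{\pm1}]$, which is smooth over $\Spec W$ with $\omega_{X/\Spec W}|_{T}=\Omega^{d}_{T/\Spec W}$ free on $dx_{1}\wedge\cdots\wedge dx_{d}$, the form $\theta$ differs from that generator by the unit $(x_{1}\cdots x_{d})^{-1}$, so $\theta$ generates $\omega_{X/\Spec W}$ over all of $T$. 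Since $\sigma$ is strongly convex one checks $X\setminus T=\bigcup_{i}D_{i}$, hence $\divisor(\theta)$ is supported on $\bigcup_{i}D_{i}$; in particular the prime divisor cut out by the special fibre of $X\to\Spec W$ does \emph{not} appear, since its generic point lies on $T$ (each $D_{i}$ is a monomial prime, and none contains $p$).

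It remains to compute the multiplicity of $D_{i}$ in $\divisor(\theta)$. The local ring of $X$ at the generic point $\eta_{i}$ of $D_{i}$ is the discrete valuation ring of the monomial valuation $v_{i}$, which sends $x^{m}\mapsto\langle m,v_{i}\rangle$ and is trivial on $W$; since $v_{i}$ is a primitive lattice vector I would extend it to a $\mathbb Z$--basis $v_{i}=n_{0},n_{1},\dots,n_{d-1}$ of $N$, take the dual basis $m_{0},\dots,m_{d-1}$ of $M$ (so $\langle m_{0},v_{i}\rangle=1$ and $\langle m_{j},v_{i}\rangle=0$ for $j\geq1$), and work in the torus coordinates $y_{j}=x^{m_{j}}$. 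Under this $\mathrm{GL}_{d}(\mathbb Z)$ change of variables $\theta=\pm\,\frac{dy_{0}}{y_{0}}\wedge\cdots\wedge\frac{dy_{d-1}}{y_{d-1}}$; in $\mathcal O_{X,\eta_{i}}$ the element $y_{0}$ is a uniformizer while $y_{1},\dots,y_{d-1}$ are units, so $\theta$ is a unit times $y_{0}^{-1}\,dy_{0}\wedge\cdots\wedge dy_{d-1}$ in the free rank--one module $\Omega^{d}_{\mathcal O_{X,\eta_{i}}/\Spec W}$, giving $\mathrm{ord}_{D_{i}}(\theta)=-1$. (Equivalently: replace $\sigma$ by the single ray through $v_{i}$, where the toric scheme is $\Spec W[y_{0},y_{1}^{\pm1},\dots,y_{d-1}^{\pm1}]$, smooth over $W$, and the computation is immediate; the two rings have the same localization at $\eta_{i}$.) Combining the two displays yields $K_{X}=\divisor(\theta)=-\sum_{i}D_{i}$.

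The only real subtlety --- and the one place the argument genuinely differs from \cite{FUL} --- is showing that the special fibre contributes no component: this rests on $\omega_{W}$ being trivial (so that passing to the relative dualizing sheaf and relative differentials is legitimate) together with the combinatorial fact that the generic point of the special fibre lies on the torus. The coordinate change computing $\mathrm{ord}_{D_{i}}(\theta)=-1$, and the identification of the canonical module with the relative dualizing sheaf, are routine. As an alternative to the entire differential--form computation, one could instead establish the mixed characteristic analogue of Danilov's description $\omega_{R}=\bigoplus_{m\in\Int(\sigma^{\vee})\cap M}W\cdot x^{m}$ and observe that this reflexive module is visibly $\mathcal O_{X}(-\sum_{i}D_{i})$, both being the rational functions of order $\geq1$ along every $D_{i}$ and of order $\geq0$ elsewhere.
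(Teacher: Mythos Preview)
Your proof is correct but follows a genuinely different route from the paper. After the common first step of identifying $K_X$ with the relative canonical $K_{X/W}$ via $W$ being Gorenstein, the paper does not compute with differential forms at all. Instead it argues that $K_X$ may be represented by a horizontal divisor, and that two horizontal divisors on $X$ are linearly equivalent if and only if they become linearly equivalent after inverting $p$; this reduces the claim to the classical toric result over the fraction field of $W$, which is then quoted from the literature. Your argument, by contrast, reproves the toric computation from scratch by writing down the rational section $\theta=\bigwedge_i dx_i/x_i$ and computing its order along each $D_i$ via a lattice change of coordinates. The paper's version is shorter and makes the reduction to characteristic zero explicit, at the cost of treating that case as a black box and leaving the ``linearly equivalent iff linearly equivalent after inverting $p$'' step somewhat informal. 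Your version is longer but self-contained, and your observation that the generic point of $V(p)$ lies on the torus $T$---so the special fibre contributes no component to $\divisor(\theta)$---is exactly the content of the paper's passage to horizontal representatives. Your suggested alternative via the Danilov description $\omega_R=\bigoplus_{m\in\Int(\sigma^\vee)\cap M}W\cdot x^m$ would also work and is in fact close in spirit to the explicit local cohomology description used later in the paper.
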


\begin{proof}
Let $K_{X}$ be a canonical divisor on $X$ and $K_{X/W}$ the relative canonical of $X$ over $W$. Since $\Spec W$ is Gorenstein $K_{X/W} \sim K_{X}$ and we may assume $K_{X}$ is horizontal. Any two horizontal divisors are linearly equivalent if and only if they are linearly equivalent after inverting $p$. Then since $K_{X} \otimes R_{p} = K_{R_{p}}$ and $-\sum D_{i}\otimes R_{p} = K_{R_{p}}$ it follows that $-\sum D_{i}$ is a canonical divisor on $X$.  
\end{proof}

\subsection{Multiplier ideals and test ideals of monomial ideals} 

We briefly recall the definition of the multiplier ideal in characteristic zero and the test ideal in positive characteristic. In characteristic 0, let $(X, \Delta)$, be a pair consisting of a normal variety $X$ and a $\mathbb{Q}$-divisor $\Delta$ such that $K_{X} + \Delta$ is $\mathbb{Q}$-Cartier. Given an ideal sheaf $\mf{a}$ on $X$, take a log resolution $\mu: Y \rightarrow X$ of $\mf{a}$ that is simultaneously a log resolution of the pair. Then for all rational $t >0$ we define the multiplier ideal of $\mf{a}^{t}$ on the pair $(X, \Delta)$ to be
$$
\mathcal{J}((X, \Delta), \mf{a}^{t}) = \mu_{*}\mathcal{O}_{Y}(K_{Y} - \floor{\mu^{*}(K_{X} + \Delta) + tA})
$$
where  $A$ ia an effective normal crossing divisor given by $\mf{a}\cdot \mathcal{O}_{Y} = \mathcal{O}_{Y}(-A)$.

If $(R,\mf{m})$ is instead a local ring of characteristic $p>0$ and $\mf{a}$ is an ideal then the test ideal is defined as:
$$
\tau(R, \mf{a}^{t}) = \Ann_{R}\{\eta \in H^{d}_{\mf{m}}(R) \ | \  c^{1/p^{e}}(\mf{a}^{\ceil{tp^{e}}})^{1/p^{e}} \eta = 0 \in H^{d}_{\mf{m}}(R^{1/p^{\infty}}) \mbox{ for all } e \gg 0\}
$$

In either setting it was shown that multiplier/test ideal of a monomial ideal of a toric variety can be computed using a remarkably simple formula. 

\begin{theorem} [\cite{B2004}, \cite{H2001}, \cite {HY2003}] \label{BL}
Let $X = \Spec R$ be a toric variety over a field of characteristic zero. Let $(X, \Delta)$ be a pair with $\Delta$ a torus invariant $\mathbb{Q}$-divisor such that $K_{X} + \Delta$ is $\mathbb{Q}$-Cartier. Since $K_{X} + \Delta$ is $\mathbb{Q}$-Cartier and torus invariant, there exists a monomial $x^{u}$  such that div $x^{u}= r(K_{X} + \Delta)$ for some integer $r$. Setting $w = \frac{u}{r} $ we have that,
$$
\mathcal{J}((X, \Delta), \mf{a}^{t}) = \{ x^{v} \in R \ | \ v-w  \in\Int(t\Newt(\mf{a}) \}. 
$$
If $X$ is instead a toric variety over a field of positive characteristic then
$$
\tau(\mf{a}^{t}) = \{ x^{\lambda} \in A \ | \ \exists \ \beta \mbox{ with } \langle \beta, v_{i} \rangle \leq 1 \mbox{ for all $i$, such that }\ \lambda + \beta  \in \Int(t\Newt(\mf{a})) \}. 
$$
where the  $v_{i}$ are the first lattice points of the edges of $\sigma$. If in addition $R$ is $\mathbb{Q}$-Gorenstein, then there is a $w_{o}$  with $(w_{0}, v_{i}) = 1$ for all $i$. Therefore $x^{\lambda} \in \tau(\mf{a}^{t})$ if and only if $m + w_{0} \in$ interior of $c \Newt(\mf{a})$.
\end{theorem}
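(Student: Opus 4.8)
The plan is to prove the two formulas separately, since the characteristic-zero case reduces to a direct computation on the toric log resolution, while the positive-characteristic case is a Cartier-operator / splitting computation. For both parts the essential input is the explicit combinatorial description of torus-invariant divisors on $X$ and of the blow-up resolving a monomial ideal.

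First, the characteristic-zero formula. I would take the normalized blow-up $\mu\colon Y\to X$ along $\mf a$, which is again toric: $Y$ corresponds to a fan refinement $\Sigma$ of $\sigma$, the exceptional (and total transform) data is encoded by the \emph{support function} $\psi_{\mf a}$ of $\Newt(\mf a)$, namely $\psi_{\mf a}(v)=\min\{\langle m,v\rangle : m\in\Newt(\mf a)\}$, so that along the ray spanned by a primitive generator $v$ of a ray of $\Sigma$ the coefficient of the corresponding prime divisor $E_v$ in $A=-\mu^*\divisor(\mf a\cdot\ssheaf_Y)$ is $\psi_{\mf a}(v)$ (piecewise-linear, convex). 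Similarly $K_Y=-\sum E_v$ over all rays of $\Sigma$, and since $K_X+\Delta=\divisor(x^u)/r$ is Cartier cut out by the linear functional $w=u/r$, one has $\mu^*(K_X+\Delta)$ has coefficient $\langle w,v\rangle$ along $E_v$. Then $\mu_*\ssheaf_Y(K_Y-\lfloor \mu^*(K_X+\Delta)+tA\rfloor)$ is the module of Laurent monomials $x^\lambda$ whose divisor on $Y$ dominates $-K_Y+\lfloor\mu^*(K_X+\Delta)+tA\rfloor$; evaluating on each ray this says $\langle\lambda,v\rangle \ge -1+\lfloor \langle w,v\rangle + t\psi_{\mf a}(v)\rfloor$ for all rays $v$ of $\Sigma$, equivalently $\langle\lambda-w,v\rangle > t\psi_{\mf a}(v)-1$ for all such $v$, and since these rays include all primitive lattice vectors needed, this is exactly the condition $\langle\lambda-w,v\rangle > t\psi_{\mf a}(v)-1$ for all $v$ — which after checking the relationship between the strict inequality, the floor, and the ``$-1$'' shift is precisely $\lambda-w\in\Int(t\Newt(\mf a))$. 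This last equivalence (floor plus shift by $-1$ corresponds to the \emph{open} condition defining the interior) is the one place where a small lemma is needed; it is the standard ``rounding down vs. taking the interior'' trick from Howald's theorem and I would cite \cite{H2001}.

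Second, the positive-characteristic formula. Here $R=k[\sigma^\vee\cap M]$ is $F$-finite and $R^{1/p^e}$ is again a (normal, toric) semigroup ring, free as an $R$-module with basis the monomials $x^{\beta}$, $\beta\in\frac{1}{p^e}M$, lying in a fundamental domain; the $R$-linear maps $R^{1/p^e}\to R$ are generated over the projection-to-the-principal-part map by multiplication by monomials, and $H^d_{\mf m}(R)$ has a \v Cech description on the monomial basis. Unwinding the definition $\tau(R,\mf a^t)=\Ann_R\{\eta : c^{1/p^e}(\mf a^{\lceil tp^e\rceil})^{1/p^e}\eta=0 \text{ in } H^d_{\mf m}(R^{1/p^\infty})\text{ for }e\gg0\}$ monomial-by-monomial, a monomial $x^\lambda$ annihilates the top local cohomology ``socle-type'' classes after multiplication by $c^{1/p^e}$ and by a monomial of $\mf a^{\lceil tp^e\rceil}$ of the form $x^{\lceil tp^e\rceil\mu}$ precisely when $\lambda + \frac1{p^e}(\text{something in }\lceil tp^e\rceil\Newt(\mf a)) + (\text{a correction bounded by the anti-canonical}) $ lands in the interior of the relevant cone; letting $e\to\infty$ the factor $\frac1{p^e}\lceil tp^e\rceil\to t$ and the $\frac1{p^e}$-correction coming from $c$ disappears into the interior, while the remaining bounded correction is exactly a vector $\beta$ with $\langle\beta,v_i\rangle\le 1$ for all $i$ (coming from the $K_X=-\sum D_i$ and the freeness basis of $R^{1/p^e}$). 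This yields the displayed criterion $x^\lambda\in\tau(\mf a^t)\iff \exists\,\beta,\ \langle\beta,v_i\rangle\le1\ \forall i,\ \lambda+\beta\in\Int(t\Newt(\mf a))$. The $\mathbb Q$-Gorenstein refinement is immediate: $\mathbb Q$-Gorenstein means $-\sum D_i=\divisor(x^{u_0})/r$ is $\mathbb Q$-Cartier, i.e. there is $w_0\in M_{\mathbb Q}$ with $\langle w_0,v_i\rangle=1$ for all $i$, so the optimal choice among all admissible $\beta$ is $\beta=w_0$ and the existential collapses to $\lambda+w_0\in\Int(t\Newt(\mf a))$.

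I expect the main obstacle to be the positive-characteristic bookkeeping: correctly tracking which monomial multipliers $R^{1/p^e}\to R$ one is allowed to use, how the test element $c$ (which only needs to be a nonzerodivisor in the test ideal, so can be taken monomial or a single monomial times a unit) contributes a vanishing-in-the-limit perturbation, and verifying that the bound on the correction vector is exactly ``$\langle\beta,v_i\rangle\le1$'' rather than an off-by-one variant. The characteristic-zero part, by contrast, is a clean unwinding of toric definitions once the blow-up fan and its support-function description of $A$ are in hand, with the only subtlety being the rounding/interior lemma noted above.
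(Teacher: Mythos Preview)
The paper does not give its own proof of this theorem: Theorem~\ref{BL} is stated as a background result and attributed to Howald \cite{H2001}, Hara--Yoshida \cite{HY2003}, and Blickle \cite{B2004}, with no argument supplied beyond those citations. So there is no ``paper's own proof'' to compare against; your proposal is effectively a sketch of the arguments in the cited literature.

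That said, your outline is on the right track and matches what those references do. The characteristic-zero half is exactly Howald's computation (extended to pairs by Blickle): pass to a toric log resolution, read off coefficients of $K_Y$, $\mu^*(K_X+\Delta)$, and $A$ from the support function, and reduce the membership condition to the strict inequalities defining $\Int(t\Newt(\mf a))$ via the floor/interior lemma you flag. The positive-characteristic half follows Hara--Yoshida and Blickle: use the monomial basis of $R^{1/p^e}$ over $R$, describe $\Hom_R(R^{1/p^e},R)$ by monomial projections, and track the asymptotics as $e\to\infty$; you are right that the delicate point is the bookkeeping on the correction vector $\beta$ and the bound $\langle\beta,v_i\rangle\le 1$, which comes from identifying the generating map in $\Hom_R(R^{1/p^e},R)$ with the section corresponding to $(1-p^e)K_X$. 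Your sketch of that step is vague (``something in $\lceil tp^e\rceil\Newt(\mf a)$'' plus ``a correction bounded by the anti-canonical'') and would need to be made precise to count as a proof, but the strategy is the standard one.
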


\noindent The above result for multiplier ideals was proved by Blickle \cite{B2004} as an extension of Howald's formula for monomial ideals \cite{H2001}. In the positive characteristic setting Hara and Yoshida \cite{HY2003} proved the $\mathbb{Q}$-Gorenstein case \cite{HY2003}. Their result was extended to the non  $\mathbb{Q}$-Gorenstein case by Blickle \cite{B2004}.

\section{Results}

In the first section we will give a few preliminary results necessary for the main result of section two - the proof of the formula for the BCM test ideal of pairs. In the third section we define the BCM test ideal of triples and extend our results to that setting. 

\subsection{Preliminaries}

 We will describe our mixed characteristic toric scheme using the language outlined in section 2. In particular we will work in the following setting: $X = \Spec R = \Spec W[\sigma^{\vee} \cap M]$, where $\sigma$ is a strongly convex rational polyhedral cone, $v_{1}, \ldots, v_{s}$ denote the first lattice points of the rays of $\sigma$ and $W$ denotes a mixed characteristic DVR such that $W/\varpi$ is algebraically closed. For convenience we will denote by $R_{n} = W[\sigma^{\vee} \cap \frac{1} {n}  M] $ where $n$ is a positive integer.  The ideal $\mf{n} \subset R$ will denote the ideal generated by $\{ x^{v}\ | \ v \in \sigma^{\vee} \cap M \setminus \{0\}\}$. Since $\sigma$ (and therefore $\sigma^{\vee}$) is strongly convex the ideal $\mf{m} = (\varpi,\mf{n})$ is a maximal ideal of $R$, $R$ is normal and Cohen-Macaulay.

One obstruction to computing the big Cohen-Macaulay test ideal is understanding the actual big Cohen-Macaulay $R^{+}$-algebra extension $B$ and its top local cohomology module $H^{d}_{\mf{m}}(B)$. In the toric case we are able to circumvent this issue by factoring the map $H^{d}_{\mf{m}}(R) \rightarrow H^{d}_{\mf{m}}(B)$ through $H^{d}_{\mf{m}}(R_{n})$ and showing that $H^{d}_{\mf{m}}(R_{n}) \hookrightarrow H^{d}_{\mf{m}}(B)$ is injective. 

\begin{defn}
A pair $\tau_{B}(R,\Delta)$ is $BCM_{B}$-regular if 
$$
\tau_{B}(R,\Delta) = R.
$$
\end{defn}

\begin{lemma} \label{pure}
Let $R_{n}$ be the mixed characteristic toric ring described at the start of the section and $B$ be a big Cohen-Macaulay $R^{+}$-algebra. Then the extension  $R_{n} \rightarrow B$ is pure. As a consequence $H^{d}_{\mf{m}}(R_{n}) \hookrightarrow H^{d}_{\mf{m}}(B)$ is injective for any $n\in \mathbb{Z}_{+}$. 
\end{lemma}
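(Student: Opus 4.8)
The two assertions are of quite different character: the injectivity on top local cohomology is a formal consequence of the purity statement, so the substantive point is the purity of $R_n \to B$. For the reduction, put $d := \dim R = \dim R_n$ and choose $x_1,\dots,x_d \in R$ generating an $\mf m$-primary ideal. Fixing an embedding of the module-finite domain extension $R_n$ into $R^+$ makes $B$ an $R_n$-algebra, and as $R_n$ is module-finite over $R$ of the same dimension, $x_1,\dots,x_d$ generate an ideal of $R_n$ primary to its maximal ideal $\mf m_n$; since a sequence of elements of $R$ is $B$-regular irrespective of whether $B$ is viewed over $R$ or over $R_n$, the algebra $B$ is big Cohen--Macaulay over $R_n$. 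Computing both groups as top \v{C}ech cohomology along $x_1,\dots,x_d$ gives $H^d_{\mf m}(R_n) = \varinjlim_t R_n/(x_1^t,\dots,x_d^t)$ and $H^d_{\mf m}(B) = \varinjlim_t B/(x_1^t,\dots,x_d^t)B$, the transition maps being multiplication by $x_1\cdots x_d$. If $R_n \to B$ is pure then each map $R_n/(x_1^t,\dots,x_d^t) \to B/(x_1^t,\dots,x_d^t)B$ is injective, and exactness of filtered colimits gives the desired injection $H^d_{\mf m}(R_n) \hookrightarrow H^d_{\mf m}(B)$. None of this is affected by replacing $R_n$ by its $\mf m$-adic completion, which is the setting of Setting \ref{set}, so I work there.

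It remains to show $R_n \to B$ is pure, equivalently that $\widehat{R_n}$ is $BCM_B$-regular. That $R_n$ is toric, and not merely normal, is essential here, and the plan is to descend the question to the special fibre. The ring $A := R_n/\varpi = (W/\varpi)[\sigma^\vee \cap \tfrac1n M]$ is a normal affine semigroup ring over the perfect, hence $F$-finite, field $W/\varpi$, and such rings are strongly $F$-regular; this is classical and amounts to a computation with the $\mathbb Z^d$-grading, splitting off suitable monomial $A$-submodules of $A^{1/p^e}$ so as to carry a fixed monomial test element in the interior of $\sigma^\vee$ to $1$. One then invokes the big Cohen--Macaulay formalism of \cite{MS2}: a deformation / inversion-of-adjunction statement along $V(\varpi) \subset \Spec \widehat{R_n}$ shows that strong $F$-regularity of $A$ forces $\tau_B(\widehat{R_n}) = \widehat{R_n}$, i.e.\ purity of $R_n \to B$.

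The step I expect to be the main obstacle is precisely this passage from characteristic $p$ to mixed characteristic. An alternative, more hands-on route makes the difficulty explicit: introduce the perfectoid tower $R_n \hookrightarrow R_{n,\infty} := W[\sigma^\vee \cap \tfrac1{np^\infty} M]$, whose $p$-adic completion, after adjoining a compatible system $\varpi^{1/p^\infty}$, is perfectoid and maps to $B$ since $B$ is an $R^+$-algebra (\cite{AND1,AND3}). The refinement of the monomial lattice $\tfrac1nM \hookrightarrow \tfrac1{np^\infty}M$ contributes an honest $R_n$-linear splitting of $R_n$ off of $R_{n,\infty}$ (the monomial projection onto exponents in $\tfrac1nM$), but the ramification in the $\varpi$-direction is only controlled in the ``almost'' sense, and upgrading that almost-purity to genuine purity of $R_n \to B$ --- using the normality of $R_n$ and the fact that $B$ is big Cohen--Macaulay --- is the delicate part. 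A secondary, purely bookkeeping matter is to keep careful track of the interplay between the finite-type ring $R_n$, its completion at $\mf m$, and the big Cohen--Macaulay $R^+$-algebra $B$, so that the purity established over $\widehat{R_n}$ returns the lemma in the stated form.
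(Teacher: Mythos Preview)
Your overall strategy---reduce purity of $R_n \to B$ to $BCM_B$-regularity, and verify the latter by descending along $V(\varpi)$ to strong $F$-regularity of the characteristic-$p$ toric ring via the deformation result of \cite{MS2}---is exactly the paper's approach. The reduction from injectivity on local cohomology to purity is also handled correctly.

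However, there is a genuine gap at the step you yourself flag as the obstacle. You write ``strong $F$-regularity of $A$ forces $\tau_B(\widehat{R_n}) = \widehat{R_n}$'', but $\tau_B(\widehat{R_n})$ with trivial boundary $\Delta = 0$ is only defined when $K_{R_n}$ is $\mathbb{Q}$-Cartier, and a general normal toric ring need not be $\mathbb{Q}$-Gorenstein. Even when it is, the deformation statement you are invoking (\cite{MS2}, Corollary~6.29) requires the index of $K_R + \Delta$ to be prime to $p$, and nothing in your argument arranges this. The alternative perfectoid-tower route you sketch does not resolve the issue either: as you note, the $\varpi$-direction only gives almost purity, and you do not indicate how to upgrade it.

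The paper closes this gap by \emph{manufacturing} a boundary. One chooses any $v \in \Int(\sigma^\vee) \cap M$, sets $a_i = \langle v, v_i\rangle > 0$, picks an integer $m$ with $a_i/m < 1$ for all $i$ and $p \nmid m$, and takes $\Delta = \sum (1 - a_i/m) D_i$. Then $K_X + \Delta = -\sum (a_i/m) D_i = \tfrac{1}{m}\divisor(x^{-v})$ is $\mathbb{Q}$-Cartier of index $m$ prime to $p$, and the combinatorial description of the test ideal (Theorem~\ref{BL}) shows $(R_n/\varpi,\, \Delta|_{R_n/\varpi})$ is strongly $F$-regular because all the coefficients of $K_X + \Delta$ are negative. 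Now \cite{MS2} Corollary~6.29 applies to the pair $(R_n,\Delta)$, giving $BCM_B$-regularity and hence purity via \cite{MS2} Theorem~6.12. This explicit construction of $\Delta$ is the missing ingredient in your proposal.
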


\begin{proof}
Fix $n$ and set $R = R_{n}$. By \cite{MS2} Theorem 6.12, if a pair $(R,\Delta)$ is BCM$_{B}$-regular then $R\rightarrow B$ is pure. By \cite{MS2} Corollary 6.29 a pair is BCM$_{B}$-regular if $K_{R} + \Delta$ is $\mathbb{Q}$-Cartier of index not divisible by $p$ and $(R/p, \Delta|_{R/p})$ is strongly $F$-regular. 

By Theorem \ref{BL}, $\tau(R/p, \Delta|_{R/p}) = R/p$ exactly when $\Delta|_{R/p} +K_{X} = \sum a_{i}D_{i}|_{R/p}$ where $ a_{i} < 0$. Pick a monomial $x^{v}$ with $v \in  \Int(\sigma^{\vee}) \cap M$. Then $\divisor(x^{\lambda}) = \sum a_{i} D_{i}$ with $a_{i} >0$ since $a_{i} = \langle v,v_{i}\rangle > 0$ because $v$ is in the interior of $\sigma^{\vee}$. Set $\Delta = \sum (1 -a_{i}/m) D_{i}$ with $m$ such that $a_{i}/m < 1$ for all $i$ and $p \not| m$. Then $\Delta+K_{x} = -\sum \frac{a_{i}}{ m }D_{i}$ is $\mathbb{Q}$-Cartier with index $m$ and the result follows.  
\end{proof}

Central to our proof will be understanding the local cohomology modules $H^{d}_{\mf{m}}(R_{n})$. 

\begin{lemma}\label{lcm}
Let $R_{n}, \mf{n}$ and $\mf{m}$ be as defined at the start of the section. Then $H^{d}_{\mf{m}}(A_{n})$ is spanned by monomials of the form $x^{-\lambda}/p^{a}$ where $\lambda \in \Int(\sigma^{\vee}) \cap \frac{1} {n} M$.
\end{lemma}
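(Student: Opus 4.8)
The plan is to use the nonzerodivisor $\varpi$ to reduce to the well-known computation of the top local cohomology of the characteristic $p$ affine semigroup ring $R_n/\varpi R_n$, and then to recover the mixed characteristic statement from the structure of torsion modules over the DVR $W$.

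Write $k=W/\varpi$ and $\overline{R}_n:=R_n/\varpi R_n=k[\sigma^\vee\cap\tfrac{1}{n}M]$, with maximal monomial ideal $\overline{\mf{m}}=\mf{m}\,\overline{R}_n$. Since $\sigma^\vee\cap\tfrac{1}{n}M$ is a saturated affine semigroup, $\overline{R}_n$ is normal, hence Cohen--Macaulay, of dimension $d-1$; and since $\varpi$ is a nonzerodivisor on the normal Cohen--Macaulay ring $R_n$, we get $H^i_{\mf{m}}(R_n)=0$ for $i<d$ and $H^i_{\overline{\mf{m}}}(\overline{R}_n)=0$ for $i\neq d-1$. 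Furthermore, by graded local duality together with the classical description of the canonical module of a normal affine semigroup ring, $H^{d-1}_{\overline{\mf{m}}}(\overline{R}_n)$ is the $\tfrac{1}{n}M$-graded Matlis dual over $k$ of $\omega_{\overline{R}_n}=\bigoplus_{\lambda\in\Int(\sigma^\vee)\cap\frac{1}{n}M}k\,x^\lambda$; concretely, the graded piece $[H^{d-1}_{\overline{\mf{m}}}(\overline{R}_n)]_v$ equals $k$ when $v=-\lambda$ for some $\lambda\in\Int(\sigma^\vee)\cap\tfrac{1}{n}M$ and vanishes otherwise.

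Now I would feed the short exact sequence $0\to R_n\xrightarrow{\varpi}R_n\to\overline{R}_n\to 0$ into local cohomology. The ideal $\mf{m}=(\varpi,\mf{n})$ is homogeneous for the $\tfrac{1}{n}M$-grading on $R_n$ (the coefficient ring $W$ sitting in degree $0$), so $H^d_{\mf{m}}(R_n)$ is $\tfrac{1}{n}M$-graded, multiplication by $\varpi$ preserves each graded piece, and $H^\bullet_{\mf{m}}(\overline{R}_n)=H^\bullet_{\overline{\mf{m}}}(\overline{R}_n)$. Using $H^{d-1}_{\mf{m}}(R_n)=0$ and $H^d_{\overline{\mf{m}}}(\overline{R}_n)=0$, the long exact sequence collapses, in each degree $v\in\tfrac{1}{n}M$, to
\[
0\longrightarrow [H^{d-1}_{\overline{\mf{m}}}(\overline{R}_n)]_v\xrightarrow{\ \delta\ }[H^d_{\mf{m}}(R_n)]_v\xrightarrow{\ \cdot\varpi\ }[H^d_{\mf{m}}(R_n)]_v\longrightarrow 0 .
\]
Thus each $[H^d_{\mf{m}}(R_n)]_v$ is $\varpi$-divisible with $(0:_{[H^d_{\mf{m}}(R_n)]_v}\varpi)\cong[H^{d-1}_{\overline{\mf{m}}}(\overline{R}_n)]_v$, and it is $\varpi$-power torsion because $H^d_{\mf{m}}(R_n)$ is $\mf{m}$-power torsion. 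If $v\neq -\lambda$ for every $\lambda\in\Int(\sigma^\vee)$, then $\cdot\varpi$ is injective and surjective on the $\varpi$-power torsion module $[H^d_{\mf{m}}(R_n)]_v$, forcing it to vanish. If $v=-\lambda$, then $[H^d_{\mf{m}}(R_n)]_{-\lambda}$ is $\varpi$-divisible --- hence injective over the DVR $W$ --- $\varpi$-power torsion, with $(0:\varpi)\cong k$; by the classification of injective $W$-modules it is a direct sum of copies of $E_W(k)=W[\varpi^{-1}]/W$, and comparing the submodule killed by $\varpi$ shows exactly one copy occurs, so $[H^d_{\mf{m}}(R_n)]_{-\lambda}\cong W[\varpi^{-1}]/W$. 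The latter is spanned over $W$ by $\{\varpi^{-a}:a\ge 1\}$, so assembling graded pieces $H^d_{\mf{m}}(R_n)=\bigoplus_{\lambda\in\Int(\sigma^\vee)\cap\frac{1}{n}M}[H^d_{\mf{m}}(R_n)]_{-\lambda}$ is spanned by the classes $x^{-\lambda}/\varpi^a$ (written $x^{-\lambda}/p^a$ in the statement), as claimed.

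The argument is essentially formal once the characteristic $p$ input is in place, so the point that deserves care is the reduction step: verifying that $R_n$ is Cohen--Macaulay at $\mf{m}$ (so that the long exact sequence really collapses to the four-term sequence above) and pinning down that the class written $x^{-\lambda}/p^a$ is the element matching $\varpi^{-a}$ under the isomorphism $[H^d_{\mf{m}}(R_n)]_{-\lambda}\cong W[\varpi^{-1}]/W$. If one prefers an explicit model of $H^d_{\mf{m}}(R_n)$ rather than an identification up to isomorphism, the alternative is to compute directly from the \v{C}ech complex on a system of parameters $\{\varpi,x^{u_1},\dots,x^{u_{d-1}}\}$ with $x^{u_1},\dots,x^{u_{d-1}}$ a monomial system of parameters for $\overline{R}_n$; the one subtlety there is the choice of the $u_i$, since monomials with $u_i$ in the interior of $\sigma^\vee$ fail to generate an $\overline{\mf{m}}$-primary ideal, after which the top cohomology of the complex can be read off directly.
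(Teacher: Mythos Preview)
Your argument is correct, but it proceeds along a genuinely different axis than the paper's. The paper decomposes $\mf{m}=(\varpi)+\mf{n}$ and first computes $H^{d-1}_{\mf{n}}(R_n)$ over $W$ via the Ishida complex (the faces of $\sigma^\vee$ give the terms, and the cokernel of the last map is visibly the $W$-span of $x^{-\lambda}$ with $\lambda\in\Int(\sigma^\vee)\cap\tfrac{1}{n}M$); it then invokes the Grothendieck spectral sequence $H^{i}_{(\varpi)}(H^{j}_{\mf{n}}(R_n))\Rightarrow H^{i+j}_{\mf{m}}(R_n)$, which degenerates because $H^{j}_{\mf{n}}(R_n)=0$ for $j\neq d-1$, yielding $H^{d}_{\mf{m}}(R_n)=H^{1}_{(\varpi)}\bigl(H^{d-1}_{\mf{n}}(R_n)\bigr)$. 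You instead reduce modulo $\varpi$ first, import the characteristic $p$ computation of $H^{d-1}_{\overline{\mf{m}}}(\overline{R}_n)$ via graded local duality and the description of $\omega_{\overline{R}_n}$, and then run the long exact sequence for $0\to R_n\xrightarrow{\varpi}R_n\to\overline{R}_n\to 0$ together with the classification of $\varpi$-divisible $\varpi$-torsion $W$-modules.

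What each buys: the paper's route is more explicit, since the Ishida complex produces honest cocycle representatives and the classes $x^{-\lambda}/\varpi^{a}$ are literally visible in $H^{1}_{(\varpi)}$ of a free $W$-module; this is exactly the point you flag yourself, namely that your identification $[H^{d}_{\mf{m}}(R_n)]_{-\lambda}\cong W[\varpi^{-1}]/W$ is a priori only an abstract isomorphism and one still has to say why the element named $x^{-\lambda}/\varpi^{a}$ corresponds to $\varpi^{-a}$. Your route, on the other hand, is cleaner in that it avoids writing out the Ishida complex and reduces the mixed characteristic statement to a black-box equal characteristic input plus elementary DVR module theory. One small quibble: your closing parenthetical about the \v{C}ech alternative asserts that monomials with exponents in the interior of $\sigma^\vee$ ``fail to generate an $\overline{\mf{m}}$-primary ideal''; in fact any $d-1$ monomials with linearly independent exponents in $\sigma^\vee$ form a system of parameters for $\overline{R}_n$, so this remark is not quite right, though it is tangential to your main argument.
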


\begin{proof}
By \cite{24HR} Theorem 20.25, $H^{i}_\mf{n}(A_{n})$ can be computed using the Ishida complex
\begin{flushleft}
$\displaystyle 0 \rightarrow W[\sigma^{\vee} \cap \frac{1}{n}M] \rightarrow \bigoplus_{\mbox{rays }F} W[\sigma^{\vee} \cap \frac{1}{n}M]_{F} \rightarrow \cdots \bigoplus_{i\mbox{-faces }F} W[\sigma^{\vee} \cap \frac{1}{n}M]_{F}\xrightarrow{\delta^{i}}$
\end{flushleft}
\begin{flushright}
$
\xrightarrow{\delta^{i}} \cdots \rightarrow \bigoplus_{\mbox{facets }F} W[\sigma^{\vee} \cap \frac{1}{n}M]_{F} \xrightarrow{\delta^{d-1}} W[\mathbb{Z}^{d-1}] \rightarrow 0
$
\end{flushright}
where $W[\sigma^{\vee} \cap \frac{1}{n}M]_{F}$ is the semigroup ring with monomials of the form
$$
\{ x^{q-f}\ | \ q \in \sigma^{\vee} \cap \frac{1}{n}M \mbox{ and } f \in F \},
$$
where $F$ consists of the lattice points of $1/n M$ that intersect a fixed $i$-face of the cone $\sigma^{\vee}$. Note that the complex in \cite{24HR} is over a field but the computation is independent of the base ring. 

Since $R$ is Cohen-Macaulay the depth of $(p,\mf{n})$ is equal to the dimension $d$. Thus $\mf{n}$ has depth $n-1$ since $p$ is a regular element. It follows that $H^{i}_{\mf{n}}(R) = 0$ for any $i < d-1$. 

To compute $H^{d}_{\mf{m}}(R)$ observe that 
$$
\image \delta^{d-1} = \bigcup_{\substack{f \in F\\ F \mbox{ facet of } \sigma^{\vee}}} (\sigma^{\vee} - f) \cap \frac{1}{n} M
$$
then it is straightforward to verify that since $\sigma^{\vee}$ is pointed by hypothesis, the only lattice points of $\mathbb{Z}^{d-1}$ not in $\image \delta^{d-1}$ are $x^{-\lambda}$ where $\lambda \in \Int(\sigma^{\vee}) \cap \frac{1} {n} M$.

We may compute $H^{d}_{\mf{m}}(R)$ as a composition of the functors $H^{i}_{(\varpi)}(\underline{\ \ }) \circ H^{j}_{\mf{n}}(\underline{\ \ })$. Indeed, $H^{j}_{\mf{n}}(R)$ is nonzero only when $j= d-1$ via the Grothendiek spectral sequence $E^{i,j}_{2} = H^{i}_{(\varpi)}(H^{j}_{\mf{n}}(R)) \Rightarrow H^{i+j}_{(\varpi,n)}(R)$. This gives that   $H^{d}_{\mf{m}}(R) =H^{1}_{(\varpi)}\left(H^{d-1}_{\mf{n}}(R) \right)$. Then these are precisely monomials of the form $x^{-\lambda}/\varpi \in H^{d}_{\mf{m}}(A_{n})$ where $\lambda \in \Int(\sigma^{\vee}) \cap \frac{1} {n} M$.
\end{proof}

\begin{lemma} \label{inj}
Let $R = W[\sigma^{\vee} \cap M]$. The $R$-module $E$ spanned by monomials $\{ax^{v} \ | \ v \in -\sigma^{\vee} \cap M, a \in W_{p}/W \}$ is an injective hull of $R$
\end{lemma}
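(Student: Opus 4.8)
I read the statement as asserting that $E$ is an injective hull of the residue field $R/\mathfrak m$ (with $\mathfrak m=(\varpi,\mathfrak n)$), which is the version relevant for Matlis duality. The plan is to build an auxiliary injective $R$-module from the $W$-module structure of $R$ and cut $E$ out of it as the $\mathfrak m$-power torsion. Write $\Lambda=\sigma^\vee\cap M$, so that $R=\bigoplus_{v\in\Lambda}Wx^v$ is a free $W$-module on the monomial basis. Since $W_p/W$ is the injective hull of the residue field of the DVR $W$ (in particular an injective $W$-module), the adjunction $\Hom_R(-,\Hom_W(R,I))\cong\Hom_W(-,I)$ shows $\Hom_W(R,W_p/W)$ is an injective $R$-module. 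A $W$-linear map $R\to W_p/W$ is a function $f\colon\Lambda\to W_p/W$, with $x^u$ acting by the shift $f\mapsto(v\mapsto f(u+v))$; sending $ax^w$ (for $w\in-\Lambda$, $a\in W_p/W$) to the functional taking $x^{-w}$ to $a$ and all other monomial basis vectors to $0$ embeds $E$ into $\Hom_W(R,W_p/W)$ as the submodule of finitely supported functions. One checks this is $R$-linear: the truncation in the definition of $E$ corresponds exactly to the fact that a shift leaving $\Lambda$ yields the zero functional.

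The first substantive step is to identify this submodule with $\Gamma_{\mathfrak m}\bigl(\Hom_W(R,W_p/W)\bigr)$. Annihilation of $f$ by $\mathfrak n^{N}$ means $f(u+v)=0$ for all $v\in\Lambda$ and all $u$ that are sums of $N$ nonzero elements of $\Lambda$; taking $v=0$ forces the support of $f$ into the set of elements of $\Lambda$ of ``length'' $<N$. \textbf{This is where strong convexity of $\sigma^\vee$ is used}: picking an integral linear functional $\geq 1$ on $\Lambda\setminus\{0\}$, each generator of $\Lambda$ has bounded value, so the length-$<N$ set is finite. Conversely, a finitely supported function has support in such a finite set and bounded $\varpi$-denominator, hence is killed by a power of $\mathfrak m$. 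Thus $E\cong\Gamma_{\mathfrak m}\bigl(\Hom_W(R,W_p/W)\bigr)$, and since the $\mathfrak m$-power-torsion submodule of an injective module over a Noetherian ring is again injective, $E$ is injective.

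It then remains to see that $E$ is indecomposable with residue-field socle. The submodule $(\tfrac1\varpi W/W)\,x^{0}$ is a copy of $R/\mathfrak m$ inside $E$. Conversely, if $\xi=\sum a_i x^{w_i}\in E$ is annihilated by $\mathfrak m$, then $\varpi a_i=0$ forces $a_i\in\tfrac1\varpi W/W$, while annihilation by each $x^u$, $u\in\Lambda\setminus\{0\}$, forces every $w_i=0$: distinct $w_i$ remain distinct after the shift, so no cancellation occurs, and if some $-w_i\neq0$ then $u=-w_i$ keeps the $i$-th term alive in $-\Lambda$. Hence the socle of $E$ is $\cong R/\mathfrak m$. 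Finally, an injective module over a Noetherian ring is a direct sum of indecomposable injectives $E_R(R/\mathfrak p)$, and $\Gamma_{\mathfrak m}$ annihilates every summand with $\mathfrak p\neq\mathfrak m$; since $E=\Gamma_{\mathfrak m}(E)$ we get $E\cong E_R(R/\mathfrak m)^{(\mu)}$, and comparing with the one-dimensional socle of $E$ forces $\mu=1$.

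The \textbf{main obstacle} is establishing injectivity, and concretely the combinatorial identification $E=\Gamma_{\mathfrak m}\bigl(\Hom_W(R,W_p/W)\bigr)$ — i.e.\ that a power of $\mathfrak n$ annihilates precisely the functionals supported on the bounded part of $\Lambda$. An alternative route, leaning on more machinery: $R$ is Cohen--Macaulay, so by local duality $E_R(R/\mathfrak m)\cong H^d_{\mathfrak m}(\omega_R)$ with $\omega_R$ the monomial ideal generated by the $x^v$, $v\in\Int(\sigma^\vee)\cap M$, and running the Ishida-complex computation of Lemma~\ref{lcm} with $\omega_R$ in place of $R_n$ produces the asserted monomial description; the $\Hom_W$ argument is preferable in that it only uses that $R$ is Noetherian rather than complete local.
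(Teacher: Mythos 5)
Your proof is correct, but it follows a genuinely different route from the paper's. The paper argues directly: injectivity via Baer's criterion (asserting that any homomorphism $\mathfrak a \to E$ extends because such maps are controlled by the underlying lattices), and then essentiality over the socle copy of $k$ by multiplying an arbitrary nonzero element of a submodule by a suitable monomial to land in $k$. You instead realize $E$ as $\Gamma_{\mathfrak m}$ of the co-induced module $\Hom_W(R, W_p/W)$ --- injective over $R$ by the adjunction $\Hom_R(-,\Hom_W(R,I))\cong\Hom_W(-,I)$ since $W_p/W=E_W(W/\varpi)$ --- with pointedness of $\sigma^\vee$ entering to identify the $\mathfrak n^N$-torsion functionals with the finitely supported ones; you then pin down $E\cong E_R(R/\mathfrak m)$ via the socle computation and the Matlis decomposition of injectives over a Noetherian ring. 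Your socle computation is essentially the paper's essentiality argument in different clothing, but your injectivity argument is a real departure and is substantially more rigorous: the paper's Baer-criterion step is declared ``clear'' on grounds (every $R$-module homomorphism ``is an isomorphism on the underlying lattices'') that do not obviously apply to non-graded maps $\mathfrak a\to E$, whereas your co-induction argument needs no such claim and gives injectivity for free once the torsion identification is made. The cost is reliance on the structure theory of injective modules over Noetherian rings, which the paper's elementary (if sketchy) argument avoids. Both you and the paper read ``injective hull of $R$'' as the evidently intended injective hull of the residue field $R/\mathfrak m$, which is what is used later in the duality pairing.
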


\begin{proof}
First, we show that $E$ is injective. By Baer's Criterion it suffices to show that for any ideal $\mf{a}\subseteq R$ and $R$-module homomorphism $\mf{a} \rightarrow E$ lifts to a map $R \rightarrow E$. This is clear since any $R$-module homomorphism must be an isomorphism on the underlying lattices defining $R$ and $E$.  

Next we show that $E$ is an essential extension of $k$. Take any proper submodule $N \subseteq E$ with $N \neq 0$. Then $N$ consists of $W_{p}/W$-linear combinations of monomials. Multiplying any element of $N$ by a suitable monomial with coefficient in $W$ yields a nonzero element of $k$. Thus $N \cap k \neq 0$ and so $E$ is an essential extension of $k$.  
\end{proof}

\subsection{BCM Test Ideal of Pairs}  We are now prepared to prove the main result. Let $X = \Spec R = \Spec W[\sigma^{\vee} \cap M]$ be a mixed characteristic toric scheme,  $\Delta$ an effective $\mathbb{Q}$-Cartier torus invariant divisor such that on $\Spec R$ such that $\Delta + K_{R}$ is $\mathbb{Q}$-Cartier. There is a monomial $x^{u}$ such that $\divisor x^{u} = r(K_{x} + \Delta)$ for some integer $r$. Set $w = u/r$. 

\begin{theorem} \label{MRG} 
Let $R$ and $\mf{m}$ be defined as at the start of the section, $\hat{R}$ be the $\mf{m}$-adic completion of $R$ at the maximal ideal $\mf{m}$ and $B$ be a big Cohen-Macaulay $\hat{R}^{+}$-algebra. Then $\tau_{B}(\hat{R},\Delta )$ is a monomial ideal and a monomial $x^{v} \in \tau_{B}(\hat{R},\Delta )$ if and only if
$$
v - w \in \Int(\sigma^{\vee}).
$$
If $\Gamma$ is an effective $\mathbb{Q}$-Cartier divisor with $n \Gamma = \divisor x^{w}$ then $x^{v} \in \tau_{B}(\omega_{\hat R}, \Gamma)$ if and only if
$$
v \in \Int(\Newt((x^{w}))).
$$
\end{theorem}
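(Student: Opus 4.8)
The plan is to compute $\tau_{B}(\hat R,\Delta)=\Ann_{\omega_{\hat R}}\!\big(0^{B,\,K_{\hat R}+\Delta}_{H^{d}_{\mf m}(\hat R)}\big)$ by first removing $B$ from the picture and then solving an explicit lattice‑point problem on $\sigma^{\vee}$. Write $\Delta=\sum_{i}\delta_{i}D_{i}$ with $\delta_{i}\ge 0$, put $K_{X}=-\sum_{i}D_{i}$, and let $w\in M_{\mathbb Q}$ satisfy $\divisor x^{w}=K_{X}+\Delta$, i.e.\ $\langle w,v_{i}\rangle=\delta_{i}-1$ (this is the $w$ of the theorem). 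First I would pass to the canonical divisor $K_{\hat R}=K_{X}+\divisor x^{\rho}$ of Setting~\ref{set} with $\rho\in\Int(\sigma^{\vee})\cap M$ chosen so that $\Gamma:=K_{\hat R}+\Delta$ is effective; then $\bar w:=w+\rho\in\sigma^{\vee}\cap M_{\mathbb Q}$ satisfies $\divisor x^{\bar w}=\Gamma$, and for an appropriate $n$ (with $\bar w\in\tfrac1n M$) the monomial $x^{\bar w}=(x^{n\bar w})^{1/n}\in R_{n}$ is a compatible $n$‑th root of the defining equation $x^{n\bar w}\in R$ of $n\Gamma$. By the first bullet of the Remark, $0^{B,\Gamma}_{H^{d}_{\mf m}(\hat R)}=\ker\!\big(H^{d}_{\mf m}(\hat R)\xrightarrow{x^{\bar w}}H^{d}_{\mf m}(B)\big)$; since multiplication by $x^{\bar w}$ carries $H^{d}_{\mf m}(\hat R)$ into $H^{d}_{\mf m}(\widehat{R_{n}})$, which injects into $H^{d}_{\mf m}(B)$ by Lemma~\ref{pure} (applied with $R_{n}$ in place of $R$), we obtain
\[
0^{B,\Gamma}_{H^{d}_{\mf m}(\hat R)}=\ker\!\Big(H^{d}_{\mf m}(\hat R)\xrightarrow{x^{\bar w}}H^{d}_{\mf m}(\widehat{R_{n}})\Big),
\]
so $B$ no longer appears.

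Next I would feed in the monomial descriptions. By Lemma~\ref{lcm} (with $n=1$, resp.\ with $n$), and since local cohomology at $\mf m$ is unchanged by $\mf m$‑adic completion, $H^{d}_{\mf m}(\hat R)$ and $H^{d}_{\mf m}(\widehat{R_{n}})$ are spanned by the monomials $x^{-\lambda}/\varpi^{a}$ with $\lambda$ in the interior of $\sigma^{\vee}$ (in $M$, resp.\ $\tfrac1n M$) and $a\ge1$; multiplication by $x^{\bar w}$ sends $x^{-\lambda}/\varpi^{a}\mapsto x^{-(\lambda-\bar w)}/\varpi^{a}$, which vanishes precisely when $\lambda-\bar w\notin\Int(\sigma^{\vee})$. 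Hence $0^{B,\Gamma}_{H^{d}_{\mf m}(\hat R)}$ is the $W$‑span of $\{\,x^{-\lambda}/\varpi^{a}:\lambda\in\Int(\sigma^{\vee})\cap M,\ a\ge1,\ \lambda-\bar w\notin\Int(\sigma^{\vee})\,\}$. Since $\hat R$ is complete, $\Ann_{\omega_{\hat R}}(N)=(H^{d}_{\mf m}(\hat R)/N)^{\vee}$, which is read off from the local duality pairing $\omega_{\hat R}\times H^{d}_{\mf m}(\hat R)\to H^{d}_{\mf m}(\omega_{\hat R})=E$, given on monomials by $x^{v}\cdot(x^{-\lambda}/\varpi^{a})=x^{v-\lambda}/\varpi^{a}$. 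By Lemma~\ref{inj} the module $E$ is a monomial module whose support is a translate of $-\sigma^{\vee}$, the translating vector being the $\rho$ that records the realization of $\omega_{\hat R}$ (so the monomials of $\omega_{\hat R}$ are the $x^{\mu}$ with $\mu+\rho\in\Int(\sigma^{\vee})$); consequently $x^{v-\lambda}/\varpi^{a}=0$ in $E$ exactly when $\lambda-v-\rho\notin\sigma^{\vee}$. Therefore $x^{v}\in\tau_{B}(\hat R,\Delta)$ if and only if $\lambda-v-\rho\notin\sigma^{\vee}$ for every $\lambda\in\Int(\sigma^{\vee})\cap M$ with $\lambda-\bar w\notin\Int(\sigma^{\vee})$.

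It then remains to identify this condition with $v-w\in\Int(\sigma^{\vee})$ (recall $\bar w-\rho=w$). If $v-w\in\Int(\sigma^{\vee})$ and $\lambda-v-\rho\in\sigma^{\vee}$, then $\lambda-\bar w=(\lambda-v-\rho)+(v-w)\in\sigma^{\vee}+\Int(\sigma^{\vee})=\Int(\sigma^{\vee})$, contradicting $\lambda-\bar w\notin\Int(\sigma^{\vee})$; so the condition holds. Conversely, if $v-w\notin\Int(\sigma^{\vee})$ we may assume $x^{v}\in\omega_{\hat R}$ (otherwise $x^{v}\notin\tau_{B}$ trivially, and also $v-w\notin\Int(\sigma^{\vee})$), so $\lambda:=v+\rho\in\Int(\sigma^{\vee})\cap M$; this $\lambda$ satisfies $\lambda-v-\rho=0\in\sigma^{\vee}$ and $\lambda-\bar w=v-w\notin\Int(\sigma^{\vee})$, witnessing $x^{v}\notin\tau_{B}$. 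Because $\Delta\ge0$ forces $\langle w,v_{i}\rangle=\delta_{i}-1\ge-1$, every $v$ with $v-w\in\Int(\sigma^{\vee})$ has $\langle v,v_{i}\rangle\ge0$, i.e.\ $x^{v}\in R$, which gives the "monomial ideal" assertion. The second formula comes from running the same computation for a general effective $\mathbb Q$‑Cartier $\Gamma$ with $\divisor x^{w}=\Gamma$ — now one may take $K_{\hat R}=K_{X}$, so $\rho=0$ and $\omega_{\hat R}$ is the interior monomial ideal — yielding $x^{v}\in\tau_{B}(\omega_{\hat R},\Gamma)$ iff $v\in w+\Int(\sigma^{\vee})=\Int\big(w+\sigma^{\vee}\big)=\Int\big(\Newt((x^{w}))\big)$.

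The step I expect to be the main obstacle is making the local duality pairing in the second paragraph precise rather than correct only up to a monomial shift: the delicate bookkeeping is to track the grading of $E=H^{d}_{\mf m}(\omega_{\hat R})$ against the realization of the canonical module imposed by Setting~\ref{set}, and then to check that the resulting lattice‑point condition collapses to the single clean inequality $v-w\in\Int(\sigma^{\vee})$ phrased through the combinatorial canonical divisor $K_{X}=-\sum D_{i}$ rather than the auxiliary effective $K_{\hat R}$. By contrast, the reduction to $R_{n}$ (purity, together with the compatibility of completions and $R^{+}$) and the vanishing criterion for monomials in $H^{d}_{\mf m}(R_{n})$ are comparatively routine once Lemmas~\ref{pure} and~\ref{lcm} are available.
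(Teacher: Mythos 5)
Your proof is correct and follows essentially the same route as the paper: factor $H^{d}_{\mf m}(\hat R)\to H^{d}_{\mf m}(B)$ through $H^{d}_{\mf m}(\widehat{R_{n}})$ via purity (Lemma~\ref{pure}), compute the kernel using the Ishida-complex description of local cohomology (Lemma~\ref{lcm}), and then read off the annihilator from the monomial Matlis pairing against $E$ (Lemma~\ref{inj}). The organizational difference is how the non-effective case of $K_{\hat R}+\Delta$ is handled: the paper first proves the formula assuming $K_{\hat R}+\Delta$ effective and then removes the hypothesis at the end by invoking $\tau_{B}(\hat R,\Delta+\divisor x^{a})=x^{a}\cdot\tau_{B}(\hat R,\Delta)$ (\cite{MS2} Lemma 6.6), whereas you bake the needed effectivity into the choice $K_{\hat R}=K_{X}+\divisor x^{\rho}$ from the start and carry $\rho$ through the computation. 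Both work. One small caveat in your write-up: the $\rho$ shift in the vanishing criterion (``$x^{v-\lambda}/\varpi^{a}=0$ in $E$ iff $\lambda-v-\rho\notin\sigma^{\vee}$'') is not a property of $E$ itself (Lemma~\ref{inj} presents $E$ with support exactly $-\sigma^{\vee}$) but comes from the normalization of the identification $\omega_{\hat R}\cong\Hom_{\hat R}(H^{d}_{\mf m}(\hat R),E)$ when $\omega_{\hat R}$ is realized as the fractional ideal $x^{-\rho}\cdot\{x^{\mu}:\mu\in\Int(\sigma^{\vee})\cap M\}$. Your final inequality is right, and the shift is exactly what makes the answer come out in terms of $w$ (built from the combinatorial $K_{X}=-\sum D_{i}$) rather than $\bar w$; it is worth stating the normalization explicitly as the paper does with its explicit map $x^{a}\mapsto\phi$. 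Your added observation that $\Delta\geq 0$ forces $\langle w,v_{i}\rangle\geq -1$, hence $v-w\in\Int(\sigma^{\vee})$ already implies $x^{v}\in \hat R$, is a useful clarification of the ``monomial ideal'' claim that the paper leaves implicit.
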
 

\begin{proof}
First assume that $K_{\hat R} + \Delta$ is effective. Then by definition 
$$
\tau_{B}(\hat{R}, \Delta) = \Ann_{\omega_{\hat{R}}}\left(\ker\left( H^{d}_\mf{m}(\hat R) \xrightarrow{x^{w}}H^{d}_\mf{m}(B) \right)\right).
$$
By Lemma \ref{pure} the map $H^{d}_\mf{m}( \hat R) \xrightarrow{x^{w}}H^{d}_\mf{m}(B)$ can be factored through an intermediate extension $ \hat R_{n} = \hat{W[\sigma^{\vee} \cap \frac{1}{n}M}]$ where $n$ is chosen so that $u/r$ is an element of the fractional lattice $\frac{1}{n}M$. Let 
$$
K = \ker\left( H^{d}_\mf{m}( \hat R) \xrightarrow{x^{w}}H^{d}_\mf{m}( \hat R_{n}) \right).
$$
Using the explicit description of the local cohomology modules given in Lemma $\ref{lcm}$, everything is $\mathbb{Z}^{d-1}$ graded and so $K$ is monomial. Furthermore, it is clear that a monomial $x^{t}$ is in the kernel if and only if $t +w \not\in -\Int(\sigma^{\vee}) \cap \frac{1}{n}M$. 

To compute $\Ann_{\omega_{\hat R}}(K)$ it is necessary to understand the $\omega_{ \hat R}$ action on $H^{d}_{\mf{m}}(\hat R)$. This action is given by the perfect pairing
$$
H^{d}_{\mf{m}}(\hat R) \times \hat \omega_{R} \rightarrow \hat E
$$
where $\hat E$ is an injective hull of the residue field $\hat R /\mf{m}\hat{R}$ computed in Lemma \ref{inj}, and we are using the fact that $E(\hat R/m \hat R) \cong \hat{E(R/\mf{m})}$ and $\omega_{\hat R} \cong \hat \omega_{R}$. This pairing is induced by the isomorphism
$$
H^{d}_{\mf{m}}(\hat R) \cong \Hom_{\hat R}(\hat \omega_R, \hat E) \cong \hat{\Hom_{R}(\omega_{R}, E)} .
$$
We construct an explicit isomorphism  
\begin{align*}
H^{d}_{\mf{m}}( R) &\cong \Hom_{R}( \omega_R, E)  \\
x^{a} &\to \phi: x^{b} \rightarrow \left\{\begin{array}{ll} x^{a+b} & \mbox{ if } a+b \in -\sigma^{\vee} \\ 0 & \mbox{ otherwise} \end{array}\right.
\end{align*}
To see the above map is an isomorphism observe that a map from $\omega_{R}$ to $\hat E$ is an $\hat R$-module homomorphism if and only if there is ismorphism of the underlying lattices. Thus any $\phi \in \hat{\Hom_{R}(\omega_R, E)}$ is given by multiplication by a monomial. It follows that $\phi$ is nonzero if and only if $x^{a} \in -\Int(\sigma^{\vee})\cap M$.

To finish the proof note that 
$$
(x^{t} , x^{v}) = 0 \Leftrightarrow \langle t+v, v_{i} \rangle > 0 \mbox{ for some } i
$$
where $v_{i}$ are the first lattice points of the rays of $\sigma$. 

If $v -w \in \Int(\sigma^{\vee})$ for some $v \in \sigma^{\vee} \cap M$, then $\langle v,v_{i} \rangle > \langle w,v_{i} \rangle$ for all $i$. So for every $x^{t} \in K$ here exists some $i$ such that $\langle t,v_{i} \rangle \geq \langle t,w \rangle$. Then
$(x^{t}, x^{v}) = 0 \mbox{ for all } x^{t} \in K$ since for some $i$ $\langle t+v, v_{i} \rangle  = \langle t, v_{i} \rangle + \langle v, v_{i} \rangle > 0$. Thus if $v \in (\Int(\sigma^{\vee}) - w) \cap M$ then $x^{v} \Ann_{\hat  \omega_{R}}(K)$. 

For the reverse inclusion, note that if $v -w \not\in \Int(\sigma^{\vee})$ for $v \in \Int(\sigma^{\vee})\cap M$ then $x^{-v}\in K$ and $(x^{-v}, x^{v})$ does not pair to zero so $x^{v} \not\in \Ann_{\hat \omega_{R}}(K)$. 

If $\Delta + K_{\hat R}$ is not effective, we can find some $a \in \sigma^{\vee} \cap M$ such that $\Delta + K_{\hat R} + \divisor(x^{a})\geq 0$ . Then by \cite{MS2} Lemma 6.6
$$
\tau_{B}(\hat R,\Delta + \divisor(x^{a})) = x^{a} \cdot \tau_{B}(\hat R,\Delta).
$$
As before, let $\divisor(x^{u}) = r(K_{\hat R}+\Delta)$ and set $w = u/r$. Then we can use the previous part to compute the left hand side
$$
\tau_{B}(\hat R,\Delta + \divisor(x^{a})) = \{x^{v} \in \hat R \ | \ v - (w+a) \in \Int(\sigma^{\vee})  \}. 
$$
Then it is clear that $\tau_{B}(\hat R,\Delta)$ is monomial and a monomial $x^{v}$ is in $\tau_{B}(\hat R,\Delta)$ if and only if $v -w \in \Int(\sigma^{\vee})$.

The statement about $\tau_{B}(\omega_{\hat{R}}, \Gamma)$ follows immediately from the above since by hypothesis $w \in \sigma^{\vee}$ we can rewrite
$$
\tau_{B}(\hat{R}, \Gamma) = \{x^{v} \in \hat R \ | \ v - w \in \Int(\sigma^{\vee})\} = \{x^{v} \in  \hat R\ | \ v  \in \Int(\Newt(x^{w}))\}.
$$
\end{proof}

\subsection{BCM Test Ideals of Triples} We will use the same notation as in section 2.2.

\begin{defn} Let $(R, \mf{m})$ be a complete local domain, $\Delta$ as in Setting \ref{set} and $\mf{a}$ an ideal of $R$, $t \in \mathbb{R}_{+}$. By hypothesis there is some $g \in R$ such that $\divisor(g) = m(K_{X} + \Delta)$. Define
$$
0_{H^{d}_{\mf{m}}(R)}^{B,\Delta, \mf{a}^{t}} = \bigcap_{n=1}^{\infty} 
\ker \left( H^{d}_\mf{m}(R) \xrightarrow{g^{1/m}\mf{a}^{\ce{tn}/n}}H^{d}_\mf{m}(B) \right)
$$
Fix a sequence of generators $\{ f_{1}, \ldots, f_{n}\}$ for $\mf{a}$. We will abbreviate the set of generators as $[\underline{f}]$. Define
$$
0_{H^{d}_{\mf{m}}(R)}^{B,\Delta, [\underline{f}]^{t}} = \bigcap_{n=1}^{\infty} \bigcap_{f} \ker \left( H^{d}_\mf{m}(R) \xrightarrow{g^{1/m}f}H^{d}_\mf{m}(B) \right) 
$$
where $\divisor g = m(K_{X} + \Delta)$ and the second intersection ranges over all $f$ of the form
$$
f = \prod_{i=1}^{a} f_{j_{i}}^{1/n} \mbox{ where } a \geq tn \mbox{ for all } n \gg 0 .
$$
Then the big Cohen-Macaulay test ideal of the triple $((R,\Delta), [\underline{f}]^{t})$ with respect to $B$ is defined as
$$
 \tau_{B}((R,\Delta), [\underline{f}]^{t}) = \Ann_{\omega_{R}}(0_{H^{d}_{\mf{m}}(R)}^{B,\Delta, [\underline{f}]^{t}} ),
$$
and the big Cohen-Macaulay test ideal of the triple $((R,\Delta), \mf{a}^{t})$ with respect to $B$ is defined as
$$
 \tau_{B}((R,\Delta), \mf{a}^{t}) = \Ann_{\omega_{R}}(0_{H^{d}_{\mf{m}}(R)}^{B, \Delta,\mf{a}^{t}} ).
$$
\end{defn}

\begin{remark}
The definition given here differs slightly from the definition given in \cite{MS1}. In particular, the notation $[\underline{f}]$ used in \cite{MS1} requires not just a choice of generators but also a set of compatible $p$th roots of $f$. By working in an $R^{+}$-algebra we ensure that any choice of $f^{1/n}$ differs from another by a unit and hence does not impact the test ideal.  

Also note that the definition above omits an almost term $\omega^{1/p^{\infty}}$ and an $\epsilon$ perturbation of the exponent. As noted in \cite{MS2} it is expected that these two definitions should coincide for sufficiently large $B$. 
\end{remark}

A key difference between the two definitions is that in $\tau_{B}(\mf{a}^{t})$ we are taking $p^{e}$-th roots of sums of elements whereas in $\tau_{B}([f_{1}, \ldots, f_{n}]^{t}])$ we are not considering those roots. It follows that the two definitions may not define the same ideal but the following containment is straightforward to see.

\begin{proposition} \cite{MS2} \label{con} Fix $[\underline{f}] = \{f_{1}, \ldots, f_{n}\}$ a sequence of generators of an ideal $\mf{a} \subset R$. Then for all $t >0$ we have
$$
\tau_{B/\ms{B}}((R,\Delta), [\underline{f}]^{t}]) \subseteq \tau_{B/\ms{B}}((R, \Delta), \mf{a}^{t}).
$$
\end{proposition}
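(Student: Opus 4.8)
The plan is to reduce the asserted containment of ideals to a containment of the associated ``zero'' submodules of $H^d_\mf{m}(R)$, using that $\Ann_{\omega_R}(-)$ reverses inclusions. Concretely, it suffices to prove
$$
0_{H^{d}_{\mf{m}}(R)}^{B,\Delta, \mf{a}^{t}} \subseteq 0_{H^{d}_{\mf{m}}(R)}^{B,\Delta, [\underline{f}]^{t}}
$$
for each fixed $B$; applying $\Ann_{\omega_R}(-)$ then gives the $B$-version of the statement, and the perfectoid version follows by running the same argument for every integral perfectoid big Cohen-Macaulay $R^{+}$-algebra. Fix $g \in R$ with $\divisor(g) = m(K_X + \Delta)$ as in the definition of both submodules.

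So I would take $\eta$ in the left-hand submodule, meaning $g^{1/m} z\,\eta = 0$ in $H^d_\mf{m}(B)$ for every positive integer $n$ and every $n$-th root $z$ of an element of $\mf a^{\ce{tn}}$, and check that $\eta$ lies in the right-hand submodule, i.e.\ that $g^{1/m} f\,\eta = 0$ for every $n$ and every $f = \prod_{i=1}^{a} f_{j_i}^{1/n}$ with $a \geq tn$. The one substantive point is an elementary inclusion: $f^{n} = \prod_{i=1}^{a} f_{j_i}$ is a product of $a$ generators of $\mf a$, hence $f^n \in \mf a^{a}$; since $a$ is an integer and $a \geq tn$ we have $a \geq \ce{tn}$, so $\mf a^{a} \subseteq \mf a^{\ce{tn}}$ and thus $f^n \in \mf a^{\ce{tn}}$. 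Hence $f$ is an $n$-th root of an element of $\mf a^{\ce{tn}}$, so --- up to a root of unity, which is a unit in the $R^{+}$-algebra $B$ and hence does not change kernels on $H^d_\mf{m}(B)$, exactly as in the remark preceding the proposition --- $f$ is one of the multipliers $z$ appearing in the $n$-th term of the intersection defining $0_{H^{d}_{\mf{m}}(R)}^{B,\Delta, \mf{a}^{t}}$. Therefore $g^{1/m} f\,\eta = 0$, which is what is needed.

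I do not expect a genuine obstacle here: the whole argument is the set-theoretic inclusion $\mf a^{a}\subseteq \mf a^{\ce{tn}}$ for integers $a \geq tn$, combined with the inclusion-reversing property of $\Ann$. The only care-points are bookkeeping: that $\prod_{i=1}^a f_{j_i}^{1/n}$ and a fixed $n$-th root of $\prod_{i=1}^a f_{j_i}$ differ by an $n$-th root of unity, hence by an invertible element of $B$; and that the phrase ``$a \geq tn$ for all $n \gg 0$'' in the definition is used only through the integer inequality $a \geq \ce{tn}$, so the argument is insensitive to its precise reading. It is worth stressing in the write-up why the inclusion can only go this way: an $n$-th root of a single element of $\mf a^{\ce{tn}}$ need not be expressible as a product of $n$-th roots of generators, which is precisely the source of the possible strict inequality noted just before the statement.
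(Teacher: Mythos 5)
Your proof is correct, and it formalizes exactly the argument the paper leaves implicit: the proposition is stated with a citation to \cite{MS2} and the remark that it is ``straightforward to see,'' the point being precisely that every multiplier $f=\prod_{i=1}^{a}f_{j_i}^{1/n}$ with $a\ge tn$ is (up to a unit) an $n$-th root of an element of $\mf a^{\ce{tn}}$, so $0_{H^{d}_{\mf{m}}(R)}^{B,\Delta,\mf a^{t}}\subseteq 0_{H^{d}_{\mf{m}}(R)}^{B,\Delta,[\underline f]^{t}}$ and the annihilators reverse. Your bookkeeping (the integer inequality $a\ge\ce{tn}$, the root-of-unity ambiguity absorbed into units of the $R^{+}$-algebra, and the observation that the reverse inclusion fails because roots of sums need not be products of roots of generators) is exactly the intended content.
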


Another preliminary result we will need is the following theorem which compares the big Cohen-Macaulay test ideal and the multiplier ideal. 
\begin{theorem} \label{mult}
Let $R$ and $\Delta$ be as they are defined in Setting \ref{set}.  Then there exists an integral perfectoid big Cohen-Macaulay $R^{+}$-algebra $B$ such that: 
$$
\tau_{B}((R, \Delta), \mf{a}^{t}) \subseteq \mu_{*}\mathcal{O}_{Y}(K_{Y} - \floor{\mu^{*}(K_{X} + \Delta) + cA}).
$$
When $\pi$ is a resolution of singularities the right hand side is the multiplier ideal.
\end{theorem}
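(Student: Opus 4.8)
The plan is to dualize the inclusion into a containment of submodules of $H^{d}_{\mf{m}}(R)$ and then drag the relevant local cohomology classes along the given resolution into $R^{+}$, following the comparison argument of Ma--Schwede \cite{MS2} but carrying the ideal $\mf{a}^{t}$ along at every stage. Fix the log resolution $\mu\colon Y\to X=\Spec R$ of $(X,\Delta)$ and of $\mf{a}$ referred to in the statement, so $Y$ is regular, $\mf{a}\cdot\ssheaf_{Y}=\ssheaf_{Y}(-A)$, and the supports of $A$, $K_{Y}$ and $\mu^{*}(K_{X}+\Delta)$ form a normal crossing divisor (in the toric case any toric resolution works). By Grothendieck duality along $\mu$ together with local duality on $X$ --- the normal local analogue of the description recalled in the Introduction, twisted by $\Delta$ --- the sheaf $\mu_{*}\ssheaf_{Y}(K_{Y}-\fl{\mu^{*}(K_{X}+\Delta)+tA})$ is identified, via the embedding $R\subseteq\omega_{R}$ of Setting \ref{set}, with $\Ann_{\omega_{R}}$ of the submodule
$$
0_{H^{d}_{\mf{m}}(R)}^{\mathrm{mult}}\;=\;\ker\!\left(H^{d}_{\mf{m}}(R)\longrightarrow\mathbb{H}^{d}_{\mf{m}}\!\big(\mathbf{R}\mu_{*}\ssheaf_{Y}(\fl{\mu^{*}(K_{X}+\Delta)+tA})\big)\right),
$$
where the arrow is the edge map coming from the natural map $\ssheaf_{X}\to\mathbf{R}\mu_{*}\ssheaf_{Y}(\fl{\mu^{*}(K_{X}+\Delta)+tA})$ (which exists since $\fl{\mu^{*}(K_{X}+\Delta)+tA}\geq 0$, as $K_{X}+\Delta$ is effective in Setting \ref{set}); when $\Delta=0$ and $K_{X}=0$ this recovers $\ker\big(H^{d}_{\mf{m}}(R)\to\mathbb{H}^{d}_{\mf{m}}(\mathbf{R}\mu_{*}\ssheaf_{Y}(\fl{tA}))\big)$. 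Since $\Ann_{\omega_{R}}$ reverses inclusions, it suffices to produce one integral perfectoid big Cohen--Macaulay $R^{+}$-algebra $B$ with $0_{H^{d}_{\mf{m}}(R)}^{\mathrm{mult}}\subseteq 0_{H^{d}_{\mf{m}}(R)}^{B,\Delta,\mf{a}^{t}}$.

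So fix $\eta\in 0_{H^{d}_{\mf{m}}(R)}^{\mathrm{mult}}$, a positive integer $n$, and an element $s\in\mf{a}^{\ce{tn}}$; I must check that $g^{1/m}s^{1/n}\cdot\eta=0$ in $H^{d}_{\mf{m}}(B)$, where $\divisor(g)=m(K_{X}+\Delta)$. The element $g^{1/m}s^{1/n}$ lies in $R^{+}$, and working over a finite normal extension $S\subseteq R^{+}$ on which $g^{1/m}$ and $s^{1/n}$ are defined, its divisor dominates the pullback of $\mu^{*}(K_{X}+\Delta)+\tfrac{\ce{tn}}{n}A\geq\mu^{*}(K_{X}+\Delta)+tA\geq\fl{\mu^{*}(K_{X}+\Delta)+tA}$, using $\mf{a}\,\ssheaf_{Y}=\ssheaf_{Y}(-A)$ and $\divisor(g)=m(K_{X}+\Delta)$. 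Hence $g^{1/m}s^{1/n}$, after this scalar extension, is a section of the ideal sheaf $\ssheaf_{Y}(-\fl{\mu^{*}(K_{X}+\Delta)+tA})$, so multiplication by it factors the twisted map $H^{d}_{\mf{m}}(R)\xrightarrow{g^{1/m}s^{1/n}}H^{d}_{\mf{m}}(R^{+})$ as
$$
H^{d}_{\mf{m}}(R)\ \longrightarrow\ \mathbb{H}^{d}_{\mf{m}}\!\big(\mathbf{R}\mu_{*}\ssheaf_{Y}(\fl{\mu^{*}(K_{X}+\Delta)+tA})\big)\ \xrightarrow{\ \cdot\,g^{1/m}s^{1/n}\ }\ H^{d}_{\mf{m}}(R^{+})\ \longrightarrow\ H^{d}_{\mf{m}}(B),
$$
where the first map is the edge map of the previous paragraph, the middle arrow is ``multiply by the section, then apply the trace and functoriality maps for the cover $S\subseteq R^{+}$'', and the composite to $H^{d}_{\mf{m}}(B)$ exists because $R^{+}$ is a big Cohen--Macaulay $R$-algebra in mixed characteristic by Bhatt's theorem (which moreover identifies the $p$-completed cohomology of the absolutely integrally closed structure sheaf of $Y$ with $\widehat{R^{+}}$) and $B$ is an $R^{+}$-algebra; we take for $B$ one of André's \cite{AND3} integral perfectoid big Cohen--Macaulay $R^{+}$-algebras, picked once so that weak functoriality accommodates every finite cover $S$ at once, hence works uniformly in $n$ and $s$. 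Since $\eta$ is already killed by the edge map, $g^{1/m}s^{1/n}\cdot\eta=0$ in $H^{d}_{\mf{m}}(B)$; intersecting over all $n$ and all $s\in\mf{a}^{\ce{tn}}$ gives $\eta\in 0_{H^{d}_{\mf{m}}(R)}^{B,\Delta,\mf{a}^{t}}$, which is the sought containment. The last sentence of the theorem is then immediate from the definition of the multiplier ideal on a resolution of singularities.

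The step I expect to be the real obstacle is the middle one: one must pin down the twisting sheaf on $Y$ so that simultaneously its pushforward is the multiplier ideal and $g^{1/m}s^{1/n}$ pulls back to a genuine integral section of its inverse --- the round-down in the multiplier ideal is precisely what this integrality matches, and one must check carefully that $\tfrac{\ce{tn}}{n}A\geq tA$ really forces the divisor of $g^{1/m}s^{1/n}$ to dominate $\fl{\mu^{*}(K_{X}+\Delta)+tA}$, which is where the hypotheses that $\mf{a}$ (hence $A$) avoids $p$ and that $K_{R}+\Delta$ is effective are used. The second delicate point is the passage from $\mathbb{H}^{d}_{\mf{m}}(\mathbf{R}\mu_{*}\ssheaf_{Y})$ to $H^{d}_{\mf{m}}(B)$ and the compatibility of the trace maps over the various finite covers $S\subseteq R^{+}$: this is exactly the input for which one needs André's weakly functorial perfectoid big Cohen--Macaulay algebras \cite{AND3} together with Bhatt's Cohen--Macaulayness of $R^{+}$, and what remains to be done in the present setting is to verify that the comparison of \cite{MS2} is unaffected by the extra multiplier-type twist by $n$-th roots of elements of $\mf{a}^{\ce{tn}}$.
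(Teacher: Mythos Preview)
Your strategy is sound, but it takes a much longer route than the paper. The paper's proof is two sentences: it cites \cite{MS2} Theorem~6.21, which already establishes the containment $\tau_{B}(R,\Delta')\subseteq\mu_{*}\ssheaf_{Y}(K_{Y}-\fl{\mu^{*}(K_{X}+\Delta')})$ for \emph{pairs} and supplies a single $B$ depending only on the resolution $\mu$, and then remarks that ``summing over the appropriate divisors'' yields the triple version. Concretely, one applies the pair statement to each $\Delta'=\Delta+\tfrac{1}{n}\divisor(s)$ with $s\in\mf{a}^{\ce{tn}}$, uses $\mu^{*}(K_{X}+\Delta')\geq\mu^{*}(K_{X}+\Delta)+tA$ to land every such pair containment inside the fixed right-hand side, and intersects the Matlis-dual kernels to recover $0^{B,\Delta,\mf{a}^{t}}$.

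You instead unpack the Ma--Schwede argument and rerun it directly for triples, carrying the factor $g^{1/m}s^{1/n}$ through the local-duality/hypercohomology factorization. This is the same proof one level of abstraction lower, and your closing paragraph correctly identifies where the actual content lives: constructing the map $\mathbb{H}^{d}_{\mf{m}}(\mathbf{R}\mu_{*}\ssheaf_{Y}(\fl{\mu^{*}(K_{X}+\Delta)+tA}))\to H^{d}_{\mf{m}}(B)$ and ensuring one $B$ works uniformly for all $n$ and $s$. Those two points are precisely what \cite{MS2} Theorem~6.21 already encapsulates; by citing it as a black box for pairs, the paper sidesteps redoing that work. What your approach buys is self-containment --- the reader sees why the twist by $\mf{a}^{t}$ causes no new difficulty --- at the cost of reproving a known theorem. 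Note also that your appeal to Bhatt's Cohen--Macaulayness of $R^{+}$ would indeed close the factorization, but it is not the mechanism \cite{MS2} uses (Andr\'e's weakly functorial construction is), and the paper stays within that framework.
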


\begin{proof}
Fix a log resolution $\mu: Y \rightarrow X$ of $\mf{a}$. By \cite{MS2} Theorem 6.21 there exists an integral perfectoid big Cohen-Macaulay $R^{+}$-algebra $B$ such that
$$
\tau_{B}(R,\Delta) \subseteq \mu_{*}\mathcal{O}_{Y}(K_{Y} - \floor{\mu^{*}(K_{X} + \Delta)}).
$$
Summing over the appropriate divisors gives the desired result. 
\end{proof}

For a mixed characteristic toric ring $R = W[\sigma^{\vee} \cap M]$ we can construct a resolution of singularities using the same combinatorial process used in equal characteristic. See Appendix A for discussion.  A consequence is the following:

\begin{lemma} \label{cont}
Let $X =\Spec \hat{R} = \hat{\Spec W[ \sigma^{\vee} \cap M]}$ be the completion of a  mixed characteristic toric ring at the maximal ideal defined at the start of the section. Then for any monomial ideal $\mf{a}$ a monomial ideal not involving $p$, there exists an integral perfectoid big Cohen-Macaulay $\hat{R}^{+}$-algebra $B$ such that
$$
\tau_{B}((\hat{R}, \Delta), \mf{a}^{t}) \subseteq \{ x^{v} \in \hat{R} \ | \ v-w  \in\Int(t\Newt(\mf{a}) \}. 
$$
for all positive $t \in \mathbb{Q}$.
\end{lemma}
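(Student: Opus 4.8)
The plan is to obtain the containment by combining Theorem~\ref{mult} — the comparison of $\tau_B$ with a multiplier ideal — with a direct combinatorial evaluation of that multiplier ideal, which runs exactly parallel to the computations of Howald \cite{H2001} and Blickle \cite{B2004} recorded in Theorem~\ref{BL}.

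First I would fix a toric log resolution $\mu\colon Y\to X$ of $\mf a$. Since $\mf a$ is a monomial ideal not involving $p$, such a resolution is produced by the combinatorial subdivision procedure of Appendix~A: one refines the fan of $\sigma$ to a smooth fan $\Sigma$ on which the support function $h(u)=\min_{\lambda\in\Newt(\mf a)}\langle\lambda,u\rangle$ of $\Newt(\mf a)$ is linear on each cone, and then $\mf a\cdot\mathcal O_Y=\mathcal O_Y(-A)$ with $A=\sum_j h(u_j)E_j$, the sum over the primitive ray generators $u_j$ of $\Sigma$ and $E_j$ the corresponding torus-invariant prime divisors of $Y$. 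The point of the hypothesis that $\mf a$ does not involve $p$ is that $A$ is then horizontal, so the reduction mod $\varpi$ never enters and the combinatorics is identical to equal characteristic. A single $\mu$ of this kind resolves $\mf a^t$ for every positive rational $t$ at once, and, inspecting the proof of Theorem~\ref{mult}, the integral perfectoid big Cohen--Macaulay $\hat R^{+}$-algebra $B$ it furnishes depends only on $\mu$ (through \cite{MS2} Theorem~6.21); hence one and the same $B$ gives
$$
\tau_{B}((\hat R,\Delta),\mf a^t)\ \subseteq\ \mu_*\mathcal O_Y\!\left(K_Y-\floor{\mu^*(K_X+\Delta)+tA}\right)
$$
for all positive rational $t$, the right-hand side being the multiplier ideal since $\mu$ is a resolution of singularities.

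It then remains to identify the right-hand side with $\{x^v\in\hat R\mid v-w\in\Int(t\Newt(\mf a))\}$. Writing $K_Y=-\sum_j E_j$ and $\mu^*(K_X+\Delta)=\sum_j\langle w,u_j\rangle E_j$, the pushforward is the set of $x^v\in K(\hat R)$ with $\langle v,u_j\rangle\geq 1+\floor{\langle w,u_j\rangle+t\,h(u_j)}$ for every ray $u_j$ of $\Sigma$; since $\langle v,u_j\rangle\in\mathbb Z$ this is equivalent to the strict inequalities $\langle v-w,u_j\rangle> t\,h(u_j)$ for all $j$. The inequalities attached to the original rays $v_1,\dots,v_s$ of $\sigma$ already force $v\in\sigma^\vee\cap M$, so such $x^v$ automatically lies in $\hat R$. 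Finally, because $\Sigma$ was chosen so that $h$ is linear on each of its cones, the strict inequalities at the rays of $\Sigma$ are equivalent to $\langle v-w,u\rangle> t\,h(u)$ for all $u\in\sigma\setminus\{0\}$, i.e. to $v-w\in\Int(t\Newt(\mf a))$ — this is precisely Howald's lemma, applied verbatim over the mixed characteristic base since the $M$-grading on the relevant monomials is unaffected by $W$. Thus $\mu_*\mathcal O_Y(K_Y-\floor{\mu^*(K_X+\Delta)+tA})$ equals the claimed monomial ideal, and combined with the displayed containment this proves the lemma.

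The step I expect to require the most care is the uniformity of $B$ across all $t$ together with the last combinatorial equivalence — in particular, checking that the inequalities coming from the exceptional rays of $\Sigma$ assemble into neither more nor less than interior membership in $t\Newt(\mf a)$. Both are controlled by the single observation that a log resolution of $\mf a$ is exactly a refinement of the fan of $\sigma$ on which the support function of $\Newt(\mf a)$ becomes piecewise linear with the correct breakpoints, so that the discrete data on $\Sigma$ faithfully records the polyhedron and the passage to the $\varpi$-adic completion plays no role beyond what was already handled in Theorem~\ref{MRG}.
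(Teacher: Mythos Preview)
Your proposal is correct and follows the same overall strategy as the paper: take the toric log resolution from Appendix~A, invoke Theorem~\ref{mult} to land inside the multiplier ideal, and then identify that multiplier ideal with the monomial ideal described by $\Int(t\Newt(\mf a))$. You also handle the uniformity of $B$ in $t$ more explicitly than the paper does, which is a plus.

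The one genuine difference is in how the multiplier ideal is computed. The paper does not redo the Howald combinatorics in mixed characteristic; instead it argues that because the resolution only blows up monomials not involving $p$, every divisorial valuation $\mathrm{ord}_{E_i}$ appearing in $K_Y-\lfloor\mu^*(K_X+\Delta)+tA\rfloor$ satisfies $\nu(p)=0$ and hence restricts from a valuation on $R[1/p]$. This gives $\mathcal J((R,\Delta),\mf a^t)=\mathcal J((R[1/p],\Delta),\mf a^t)\cap R$, and the right-hand side is read off directly from Theorem~\ref{BL}. Your route---writing out $K_Y$, $\mu^*(K_X+\Delta)$, and $A$ in terms of the ray generators $u_j$ and the support function $h$, and then checking that strict inequalities at the rays of a linearizing fan are equivalent to interior membership---is the Howald argument carried out by hand over $W$. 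Both are fine; the paper's reduction is shorter and leans on the existing literature, while yours is more self-contained and makes the role of the hypothesis ``$\mf a$ does not involve $p$'' visible at the level of the divisor $A$ being horizontal rather than at the level of valuations.
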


\begin{proof}
Construct $\mu: Y \rightarrow X$ a toric log resolution of $\mf{a}$ with $\mf{a} \cdot \mc{O}_{Y} = \mc{O}(-A)$ (see Appendix A). By Theorem \ref{mult} for any effective divisor $\Delta$ with $K_{X} + \Delta$ we have the following containment
$$
\tau_{B}((\hat R, \Delta), \mf{a}^{t}) \subseteq \pi_{*}\mc{O}_{Y}(  K_{Y} -\fl{\pi^{*}(K_{X} + \Delta) + tA}) = \mathcal{J}((\hat R,\Delta), \mf{a}^{t}).
$$
Observe the right hand side is the multiplier ideal because $\mu$ is a resolution of singularities. Completion commutes with the construction of the multiplier ideal, so it suffices to compute the multiplier ideal of $R$.

We make the following observations. Any valuation on $R_{p}:= R[1/p]$ naturally gives a valuation on $R$. Furthermore, if $\nu$ is a valuation on $R$ satisfying the property that $\nu(p) = 0$ then $\nu$ is a valuation on $R_{p}$. Since our resolution was constructed by blowing up monomials not involving $p$ any of the valuations needed to compute the multiplier ideal will satisfy $\nu(p) = 0$.

Setting
$$
K_{Y} -\fl{\pi^{*}(K_{X} + \Delta) + tA} = \sum a_{i} E_{i},
$$ 
we can express the multiplier ideal as
$$
\mathcal{J}((R,\Delta), \mf{a}^{t} ) = \{ f \in R \ | \ \mbox{ord}_{E_i}(\mu^{*}f) \geq a_{i} \mbox{ for all } i \} .
$$
By the above observation, each of the conditions comes from the corresponding valuation $\mbox{ord}_{E_i \otimes R_{p}}$ given by inverting $p$. Furthermore, there are no conditions that do not arise in this manner. Thus $\mathcal{J}((R,\Delta), \mf{a}^{t} )$ is a monomial ideal generated precisely by the monomials that generate $\mathcal{J}((R_{p},\Delta), \mf{a}^{t} )$ as desired. 
\end{proof}

\begin{theorem} \label{me}
Let $X = \Spec \hat R$ be the completion of a mixed characteristic toric ring at the maximal ideal defined at the start of the section. Let $(X, \Delta)$ be a pair with $\Delta$ a torus invariant $\mathbb{Q}$-divisor. Since $K_{X} + \Delta$ is $\mathbb{Q}$-Cartier and torus invariant, there exists a monomial $x^{u}$  such that div $x^{u}= r(K_{X} + \Delta)$ for some integer $r$. Setting $w = \frac{u}{r} $ we have that,
$$
\tau_{B}(( \hat R, \Delta), \mf{a}^{t}) = \{ x^{v} \in \hat  R \ | \ v-w  \in\Int(t\Newt(\mf{a}) \}. 
$$
\end{theorem}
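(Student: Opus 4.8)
The plan is to establish the two inclusions of the asserted equality by completely different routes: the inclusion ``$\subseteq$'' is already contained in Lemma \ref{cont}, while ``$\supseteq$'' will be extracted from the purity statement of Lemma \ref{pure} together with the pairing used in the proof of Theorem \ref{MRG}. Write $J:=\{\,x^{v}\in\hat R \mid v-w\in\Int(t\Newt(\mf a))\,\}$; since the monomial generators $x^{b_1},\dots,x^{b_k}$ of $\mf a$ have exponents $b_i\in\sigma^{\vee}$, we have $t\Newt(\mf a)=\mathrm{conv}(tb_1,\dots,tb_k)+\sigma^{\vee}\subseteq\sigma^{\vee}$, so $J$ is a monomial ideal of $\hat R$. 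Lemma \ref{cont} gives $\tau_{B}((\hat R,\Delta),\mf a^{t})\subseteq J$ for the integral perfectoid big Cohen-Macaulay $\hat R^{+}$-algebra produced there, so all the work is in proving $J\subseteq\tau_{B}((\hat R,\Delta),\mf a^{t})$. By Proposition \ref{con} it is enough to prove $J\subseteq\tau_{B}((\hat R,\Delta),[\underline f]^{t})$, where $[\underline f]=\{x^{b_1},\dots,x^{b_k}\}$ is the chosen set of generators.

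The first step is an explicit computation of $0^{B,\Delta,[\underline f]^{t}}_{H^{d}_{\mf m}(\hat R)}$, modeled on the proof of Theorem \ref{MRG}. After the same reduction to the case $K_{\hat R}+\Delta\geq0$ (twisting by a monomial $x^{a}$ as in Theorem \ref{MRG}), I may take $g=x^{u}$ and $m=r$, so that, up to units in $\hat R^{+}$, every structure map in the defining intersection is multiplication by $x^{w}x^{\beta/n}$ with $\beta=\sum_{i=1}^{a}b_{j_i}$ and $a\geq tn$. Each such map factors through $\hat R_{N}$ for $N$ a common multiple of $r$ and $n$, and $H^{d}_{\mf m}(\hat R_{N})\hookrightarrow H^{d}_{\mf m}(B)$ by Lemma \ref{pure}; using the monomial description of $H^{d}_{\mf m}(\hat R)$ from Lemma \ref{lcm}, the generator $x^{-\lambda}/p^{a}$ (with $\lambda\in\Int(\sigma^{\vee})\cap M$) is carried to $x^{w+\beta/n-\lambda}/p^{a}$, which vanishes precisely when $\lambda-w-\beta/n\notin\Int(\sigma^{\vee})$. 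Intersecting over all admissible pairs $(\beta,n)$ shows that $0^{B,\Delta,[\underline f]^{t}}_{H^{d}_{\mf m}(\hat R)}$ is spanned by the monomials $x^{-\lambda}/p^{a}$ with $\lambda\in\Int(\sigma^{\vee})\cap M$ for which $\lambda-w-\beta/n\notin\Int(\sigma^{\vee})$ for every admissible $(\beta,n)$.

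The geometric core is the identity: for $\lambda\in\Int(\sigma^{\vee})\cap M$,
$$
\bigl(\lambda-w-\tfrac{\beta}{n}\notin\Int(\sigma^{\vee})\ \text{for all admissible }(\beta,n)\bigr)\iff \lambda-w\notin\Int(t\Newt(\mf a)).
$$
The direction ``$\Leftarrow$'' is formal: $\beta/n=\tfrac{a}{n}\bigl(\tfrac1a\sum b_{j_i}\bigr)\in\tfrac{a}{n}\,\mathrm{conv}(b_i)\subseteq t\,\mathrm{conv}(b_i)+\sigma^{\vee}=t\Newt(\mf a)$ because $a/n\geq t$ and $b_i\in\sigma^{\vee}$, and since $t\Newt(\mf a)$ is a closed convex set with recession cone $\sigma^{\vee}$ we have $t\Newt(\mf a)+\Int(\sigma^{\vee})\subseteq\Int(t\Newt(\mf a))$, so $\lambda-w-\beta/n\in\Int(\sigma^{\vee})$ would force $\lambda-w\in\Int(t\Newt(\mf a))$. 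The direction ``$\Rightarrow$'' is the step I expect to be the main obstacle: it asks us to realize a prescribed interior point of $t\Newt(\mf a)$ by an honest product of generators. The argument I would run: given $\lambda-w\in\Int(t\Newt(\mf a))$, fix a lattice point $z\in\Int(\sigma^{\vee})$ and a small rational $\delta>0$ with $q'':=(\lambda-w)-\delta z\in t\Newt(\mf a)$ (possible since $\Int(t\Newt(\mf a))$ is open), write $q''=t\sum\mu_i b_i+s$ with rational $\mu_i\geq0$, $\sum\mu_i=1$, $s\in\sigma^{\vee}$ (the feasibility set is a nonempty rational polyhedron, hence contains a rational point), and clear denominators: for a suitable $n$ the numbers $c_i:=nt\mu_i$ are nonnegative integers with $a:=\sum c_i=nt\geq tn$, and with $\beta:=\sum c_i b_i$ one gets $(\lambda-w)-\beta/n=s+\delta z\in\Int(\sigma^{\vee})$; rescaling $n$ makes it as large as one wishes. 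The subtlety is that the points $\beta/n$ are not dense in $t\Newt(\mf a)$, so one cannot approximate $\lambda-w$ directly; but it suffices to produce a single $\beta/n$ lying weakly inside $t\Newt(\mf a)$ and strictly below $\lambda-w$ in the $\sigma^{\vee}$-order, and the $-\delta z$ shift supplies exactly that room.

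It then remains to compute the annihilator. By the perfect pairing $H^{d}_{\mf m}(\hat R)\times\hat\omega_{R}\to\hat E$ from the proof of Theorem \ref{MRG} (with $\hat E$ the injective hull of Lemma \ref{inj}), the product $(x^{-\lambda},x^{v})$ vanishes if and only if $\langle v-\lambda,v_i\rangle>0$ for some $i$, i.e.\ if and only if $\lambda-v\notin\sigma^{\vee}$. If $v-w\in\Int(t\Newt(\mf a))$ and $\lambda-v\in\sigma^{\vee}$, then $\lambda-w=(\lambda-v)+(v-w)\in\sigma^{\vee}+\Int(t\Newt(\mf a))\subseteq\Int(t\Newt(\mf a))$, which by the identity above means $x^{-\lambda}/p^{a}\notin 0^{B,\Delta,[\underline f]^{t}}_{H^{d}_{\mf m}(\hat R)}$. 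Hence every $x^{v}$ with $v-w\in\Int(t\Newt(\mf a))$ annihilates $0^{B,\Delta,[\underline f]^{t}}_{H^{d}_{\mf m}(\hat R)}$, so $J\subseteq\tau_{B}((\hat R,\Delta),[\underline f]^{t})\subseteq\tau_{B}((\hat R,\Delta),\mf a^{t})$, and with Lemma \ref{cont} this yields the equality for the algebra $B$ of that lemma. Finally, the inclusion $J\subseteq\tau_{B'}$ holds for \emph{every} big Cohen-Macaulay $\hat R^{+}$-algebra $B'$ (the computation above uses nothing about $B$ beyond Lemma \ref{pure}), while the reverse inclusion $\tau_{B'}\subseteq J$ is inherited along any $\hat R^{+}$-algebra map $B\to B'$, so the formula holds for all sufficiently large $B$.
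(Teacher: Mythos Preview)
Your proof is correct and follows the same overall strategy as the paper: sandwich $\tau_{B}((\hat R,\Delta),\mf a^{t})$ between $\tau_{B}((\hat R,\Delta),[\underline f]^{t})$ below (via Proposition \ref{con}) and the multiplier ideal above (via Lemma \ref{cont}), and show both ends coincide with the monomial ideal $J$.

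The one organizational difference worth mentioning is in how the lower bound is computed. The paper observes that, by Matlis duality, the annihilator of the intersection of kernels is the sum of the annihilators, giving
\[
\tau_{B}((\hat R,\Delta),[\underline f]^{t})=\sum_{n\geq 1}\sum_{f\in F_{n}}\tau_{B}(\hat R,\Delta+\divisor f),
\]
and then simply applies Theorem \ref{MRG} to each summand as a black box; the union $\bigcup_{n,f}\Int(\Newt(f))$ is then identified with $\Int(t\Newt(\mf a))$. You instead rerun the local cohomology computation from the proof of Theorem \ref{MRG} directly on the triple kernel $0^{B,\Delta,[\underline f]^{t}}$ and carry out the convex-geometry identity by hand. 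Both arguments require the same convexity content (your ``$\Rightarrow$'' step is exactly what makes the paper's union equal $\Int(t\Newt(\mf a))$), but the paper's route is shorter and more modular because the pairing and annihilator computation have already been done once. Your version has the minor advantage of making the dependence on $B$ transparent: you see explicitly that only Lemma \ref{pure} is used for the lower bound, which justifies your final remark about the formula holding for all sufficiently large $B$.
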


\begin{proof} Fix a set of monomial generators $[\underline{f}=](f_{1}, \ldots, f_{k})$ for $\mf{a}$. Let 
$$
F_{n} = \{ f \ | \ f = \prod_{i=1}^{a} f_{j_i}^{1/n} \mbox{ with } a \geq tn\}.
$$
From the definition we can compute 
$$
\tau_{B}((\hat R,\Delta), [\underline{f}]^{t})  = \sum_{n=1}^{\infty} \sum_{\substack{\Gamma = \Delta + \divisor(f) \\ f \in F_{n}}} \tau_{B}(\hat R, \Gamma).
$$
Then by Theorem \ref{MRG} 
\begin{align*}
\tau_{B}((\hat R,\Delta), [\underline{f}]^{t})  &= \sum_{n=1}^{\infty} \sum_{f \in F_{n}} \{x^{v}\in \hat R\ | \ v -w\in \Int(\Newt(f))\\ &=  \{ x^{v} \in R \ | \ v-w  \in\Int(t\Newt(\mf{a})) \}. 
\end{align*}
Now the result follows from
$$
\tau_{B}((\hat R,\Delta), [\underline{f}]^{t}) \subseteq \tau_{B}(( \hat R,\Delta), \mf{a}^{t})\subseteq \mathcal{J}((\hat R,\Delta), \mf{a}^{t}) = \{ x^{v} \in \hat R \ | \ v-w  \in\Int(t\Newt(\mf{a})) \}, 
$$
where the first containment comes from Proposition \ref{con}, the second from Lemma \ref{cont} and the final equality from Proposition \ref{mult}.

\end{proof}
 
 \begin{example}
Here is an example to demonstrate the above theorem. Let $\sigma = \mbox{Cone}(e_{1} - e_{2}, e_{2})$, $W$ any mixed characteristic DVR, $R = W[\sigma^{\vee} \cap \mathbb{Z}^{2}]$ and $\mf{a} = (x^{5}y, x^{4}y^{3})$ a monomial ideal of $R$. Then for any sufficiently large big Cohen-Macaulay $\hat{R}^{+}$-algebra $B$ we can compute $\tau_{B}((\hat{R},\emptyset), \mf{a})$ as follows:

By the last part Theorem \ref{me} we are interested in computing
   $$
  \{ x^{v} \in \hat  R \ | \ v-w  \in\Int(\Newt(\mf{a}) \}, 
$$
 where $w = -(2,1)$ (since $\langle w,(1,-1) \rangle = -1$ and $\langle w, (0,1) \rangle = -1)$. 

The newton polyhedron $\Newt(\mf{a})$, pictured below inside of $\sigma^{\vee}$ as the region with the green boundary. 

\begin{figure}[H]
\begin{center}
  \begin{tikzpicture}[scale = .3]
    \coordinate (Origin)   at (0,0);
    \coordinate (XAxisMin) at (-.5,0);
    \coordinate (XAxisMax) at (15,0);
    \coordinate (YAxisMin) at (0,-.5);
    \coordinate (YAxisMax) at (0,15);
    \draw [ultra thick, black,-latex] (XAxisMin) -- (XAxisMax);
    \draw [ultra thick, black,-latex] (YAxisMin) -- (YAxisMax);

    \clip (-.5,-.5) rectangle (15cm,15cm); 
    \pgftransformcm{1}{0}{0}{1}{\pgfpoint{0cm}{0cm}}
    \coordinate (Bone) at (0,2);
    \coordinate (Btwo) at (2,-2);
    \foreach \x in {-7,-6,...,7}{
      \foreach \y in {-7,-6,...,7}{
        \node[draw,circle,inner sep=1pt,fill] at (2*\x,2*\y) {};
      }
    }

    \draw [thin,-latex,red, fill=gray, fill opacity=0.3] (0,0)
         -- ($2*(15,0)$)
         -- ($20*(1,1)$) -- cycle;
    \draw [thin,-latex,green, fill=black, fill opacity=0.3] (8,6) -- (10,2) -- ($(10,2) + 20*(1,0)$) -- ($(8,6) + 20*(1,1)$) -- cycle; 
    \filldraw [blue] (6,2) circle (2mm);
    \filldraw [blue] (6,4) circle (2mm);
    \draw [ultra thick, blue,-latex] (6,2) -- (10,4);
    \draw [ultra thick, blue,-latex] (6,4) -- (10,6);
  \end{tikzpicture}
  \caption{$\Newt(\mf{a})$ inside of $\sigma$}
  \end{center}
  \end{figure}
  From the picture we compute
  $$
  \tau_{B}(\hat{R}, \mf{a}) = (x^3y, x^3y^2).
  $$
  Note that monomials like $x^{3}$ and $x^{4}$ are not in $\tau_{B}$ because $(3,0) + (2,1)$ and $(4,0) + (2,1)$ are not in the interior of $\Newt(\mf{a})$.

\end{example}

 \appendix
\section{Mixed Characteristic Toric Resolutions}

In this section we review some additional toric geometry and then construct a toric resolution in mixed characteristic. In equal characteristic, as we will review below, we can construct a toric resolution of singularities by refining the fan. These refinements can be constructed by blowing up monomials. Our strategy for constructing a toric resolution in the mixed characteristic setting will be to blow up the same monomials as in the equal characteristic case and then arguing that this does indeed give a resolution of singularities. 

\begin{defn}
A fan $\Sigma$ in $M_{\mathbb{R}}$ is a finite collection of cones $\sigma \subseteq M_{\mathbb{R}}$ such that:
\begin{itemize}
\item Each $\sigma \in \Sigma$ is a strongly convex rational polyhedral cone.
\item For all $\sigma \in \Sigma$ every face of $\sigma$ is also in $\Sigma$.
\item For all $\sigma_{1}, \sigma_{2} \in \Sigma$, the intersection of $\sigma_{1} \cap \sigma_{2}$ is a face of each.
\end{itemize}
If $\Sigma$ is a fan then we define the support of $\Sigma$ denoted $|\Sigma|= \cup_{\sigma \in \Sigma}\sigma$. We will denote by $X(\Sigma)$ the mixed characteristic toric scheme associated to the fan $\Sigma$.
\end{defn}

\begin{defn}
Let $\Sigma$ be a fan in $M$ and $v$ a nonzero point in $|\Sigma|$. Define $\Sigma^{*}(v)$ to be the following set of cones:
\begin{itemize}
\item $\sigma$, such that $v \not \in \sigma \in \Sigma$.
\item Cone($\tau, v$), where $v \not\in \tau \in \Sigma$ and $\{v\} \cup \tau \subseteq \sigma \in \Sigma$.
\end{itemize}
We call $\Sigma^{*}(v)$ the star subdivision of $\Sigma$ at $v$. 
\end{defn}

\begin{proposition}[\cite{COX} Proposition 3.3.15]
With notation as above $\Sigma^{*}(v)$ is a refinement of $\Sigma$, and the induced toric morphism makes $X_{\Sigma^{*}(\sigma)}$ the blowup of $X_{\sigma}$ at the distinguished point $\gamma_{\sigma}$ of the cone $\sigma$. 
\end{proposition}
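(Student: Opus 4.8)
The plan is to establish the two assertions separately --- that $\Sigma^{*}(v)$ refines $\Sigma$, and that the induced toric morphism is the asserted blowup --- and to reduce the second, harder, claim to the statement over a field by a flat base change over $\Spec\mathbb{Z}$. Both claims are local on $X_{\Sigma}$, so after covering $X_{\Sigma}$ by the affine charts $U_{\sigma'}$ for $\sigma'\in\Sigma$ I would assume $\Sigma$ consists of the faces of a single cone $\sigma$ and, after passing to the face of $\sigma$ whose relative interior contains $v$, that $v\in\Int(\sigma)$.

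First I would dispose of the refinement statement. That $\Sigma^{*}(v)$ is again a fan, that every cone of $\Sigma^{*}(v)$ lies inside a cone of $\Sigma$, and that $|\Sigma^{*}(v)|=|\Sigma|$, is a purely combinatorial check of the three fan axioms: the cones $\sigma$ with $v\notin\sigma$ and the cones $\mathrm{Cone}(\tau,v)$ tile $|\Sigma|$ and meet along common faces. This argument is verbatim the one in \cite{COX} and mentions only the lattice $M$, never the base ring $W$; from it one obtains an honest toric morphism $X_{\Sigma^{*}(v)}\to X_{\Sigma}$ over $W$.

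For the blowup statement, the plan is as follows. Let $\mf b_{v}\subseteq R=W[\sigma^{\vee}\cap M]$ be the monomial ideal generated by the $x^{m}$ with $m\in\sigma^{\vee}\cap M$ realizing the least positive value of $\langle\,\cdot\,,v\rangle$ on the Hilbert basis of $\sigma^{\vee}\cap M$ --- equivalently, the monomial ideal whose associated subdivision of $\sigma$ is $\Sigma^{*}(v)$ as in the classical construction --- and note that when $\sigma$ is full-dimensional this is the ideal $\mf n$ cutting out the orbit closure $V(\sigma)=\Spec(R/\mf n)\cong\Spec W$, which is the mixed-characteristic stand-in for the distinguished point $\gamma_{\sigma}$. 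One then shows $X_{\Sigma^{*}(v)}\cong\mathrm{Bl}_{\mf b_{v}}X_{\sigma}$. The key observation is that $\mf b_{v}$ is extended from the monomial ideal with the same generators in $\mathbb{Z}[\sigma^{\vee}\cap M]$, that $R=W\otimes_{\mathbb{Z}}\mathbb{Z}[\sigma^{\vee}\cap M]$, and that both $\mathrm{Bl}_{\mf b_{v}}$ and $X_{\Sigma^{*}(v)}$ are compatible with the flat base change $\mathbb{Z}\to W$: blowing up commutes with flat base change, and a mixed-characteristic toric scheme over $W$ is the base change of its $\mathbb{Z}$-model. Hence it suffices to establish $X_{\Sigma^{*}(v)}\cong\mathrm{Bl}_{\mf b_{v}}X_{\sigma}$ over $\mathbb{Z}$, and there the proof of \cite{COX} Proposition 3.3.15 applies without change: it compares, chart by chart over the maximal $\sigma'\in\Sigma^{*}(v)$, the semigroup ring $\mathcal{O}(U_{\sigma'})$ with the matching affine chart of $\proj$ of the Rees algebra $\bigoplus_{k\ge 0}\mf b_{v}^{k}$, and every such identification is an isomorphism of semigroup rings dictated by $M$ alone. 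Base changing to $W$ then yields the claim.

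The main obstacle is bookkeeping rather than conceptual: pinning down the correct monomial ideal $\mf b_{v}$ and, in particular, reconciling ``the distinguished point $\gamma_{\sigma}$'' with its mixed-characteristic replacement, which is the orbit closure $V(\sigma)$ --- a copy of $\Spec W$, not an honest point. A secondary point to watch is that if one prefers to phrase \cite{COX} Proposition 3.3.15 as a \emph{normalized} blowup, one must verify that $\mathrm{Bl}_{\mf b_{v}}X_{\sigma}$ is already normal; this is automatic here, since it is the normal toric scheme $X_{\Sigma^{*}(v)}$, whose normality was recorded in Section~2.2, so no normalization step is lost under the base change.
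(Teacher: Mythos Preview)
The paper does not give its own proof of this proposition: it is quoted verbatim from \cite{COX} as a classical result over a field, serving only as background for the theorem immediately following, where the actual mixed-characteristic resolution is constructed by performing the same monomial blowups over $W$ and checking smoothness fiberwise. So there is nothing to compare against.

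Your proposal goes well beyond what the paper does. You supply an argument, and you already cast it in the mixed-characteristic setting by reducing to the $\mathbb{Z}$-model and using that blowups and toric schemes commute with the flat base change $\mathbb{Z}\to W$. That strategy is sound and is in fact the mechanism the paper invokes informally in the next theorem. Two small cautions. First, the statement in \cite{COX} (and as quoted here) is the special case where $\sigma$ is a smooth cone and $v=u_{\sigma}=\sum u_{i}$, so that the center really is the distinguished point $\gamma_{\sigma}$; your description of a general monomial ideal $\mf b_{v}$ ``realizing the least positive value of $\langle\,\cdot\,,v\rangle$ on the Hilbert basis'' is more general than what is being asserted, and the identification of $\mf b_{v}$ with the maximal monomial ideal $\mf n$ only holds in that special case, not for arbitrary $v\in\Int(\sigma)$. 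Second, as you note yourself, in mixed characteristic the ``distinguished point'' is the closed subscheme $V(\sigma)\cong\Spec W$ rather than a closed point, so the phrasing of the proposition should be read accordingly; the paper is silently using the equal-characteristic language here.
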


\begin{theorem} \label{res}  [\cite{COX} Theorem 11.1.9]
Every fan $\Sigma$ has a refinement $\Sigma'$ with the following properties
\begin{itemize}
\item $\Sigma'$ is smooth
\item $\Sigma'$ contains every smooth cone of $\Sigma$
\item $\Sigma'$ is obtained from $\Sigma$ by a sequence of star subdivisions
\item The toric morphism $X_{\Sigma'} \rightarrow X_{\Sigma}$ is a projective resolution of singularities.
\end{itemize}
\end{theorem}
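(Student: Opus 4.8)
The plan is to separate the purely combinatorial content, which does not see the ground ring, from the scheme-theoretic assertions, which are the only places the mixed characteristic base enters. First I would invoke the combinatorial resolution algorithm for fans: given any fan $\Sigma$ in $N_{\mathbb{R}}$, there is a refinement $\Sigma'$ that is smooth, that contains every smooth cone of $\Sigma$, and that is reached from $\Sigma$ by a finite sequence of star subdivisions $\Sigma = \Sigma_{0}, \Sigma_{1}, \ldots, \Sigma_{N} = \Sigma'$, with each $\Sigma_{i+1} = \Sigma_{i}^{*}(v_{i})$ for a suitable primitive lattice point $v_{i}$. This is a statement about lattices and rational polyhedral cones alone, with no reference to a ground ring, so it is literally \cite{COX} Theorem 11.1.9, and it supplies the first three bullet points verbatim. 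Note also that a star subdivision of a fan of strongly convex rational polyhedral cones is again such a fan, so each $\Sigma_{i}$ is again a mixed characteristic toric scheme of the type under consideration.

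It then remains to check, in the mixed characteristic setting, that the induced toric morphism $X_{\Sigma'} \to X_{\Sigma}$ is a projective resolution of singularities, and I would verify its three features in turn. For regularity of $X_{\Sigma'}$: regularity is a local property, and $X_{\Sigma'}$ is covered by the affine charts $\Spec W[\tau^{\vee} \cap M]$ with $\tau$ a maximal cone of $\Sigma'$; since $\tau$ is smooth it is spanned by part of a $\mathbb{Z}$-basis of $N$, so $W[\tau^{\vee} \cap M]$ is a Laurent polynomial ring in $d$ variables over $W$, hence regular because the DVR $W$ is regular. For projectivity: by \cite{COX} Proposition 3.3.15, recorded above for mixed characteristic toric schemes, each step $X_{\Sigma_{i+1}} \to X_{\Sigma_{i}}$ is the blowup of $X_{\Sigma_{i}}$ along the monomial ideal of the distinguished closed subscheme of the subdivided cone, hence projective; a finite composition of projective morphisms is projective, so $X_{\Sigma'} \to X_{\Sigma}$ is projective and in particular proper. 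For birationality: the cone $\{0\}$ lies in every fan and is smooth, so it belongs to both $\Sigma$ and $\Sigma'$, and over the corresponding chart $\Spec W[M]$ the morphism is the identity; more generally, since every smooth cone $\sigma$ of $\Sigma$ survives in $\Sigma'$, the cones of $\Sigma'$ contained in $\sigma$ are exactly the faces of $\sigma$, so $X_{\Sigma'} \times_{X_{\Sigma}} \Spec W[\sigma^{\vee}\cap M] = \Spec W[\sigma^{\vee}\cap M]$ and the morphism is an isomorphism over the regular locus of $X_{\Sigma}$. A projective, birational morphism from a regular scheme is a resolution of singularities, which is the fourth bullet point.

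The step I expect to require the most care is the one that in the classical treatment is automatic from the torus action, namely that a refinement of fans really does yield a proper morphism of mixed characteristic toric schemes. I would avoid re-deriving a valuative criterion in this setting and instead lean on the factorization into blowups: a composition of blowups along closed subschemes is projective, hence proper, which is precisely why it is essential that \cite{COX} Proposition 3.3.15 be available over $W$. (Alternatively one could check properness chart by chart after the faithfully flat base changes $W \to W/p$ and $W \to K(W)$, reducing to the classical toric statement over a field, but the blowup argument is cleaner and self-contained.) Everything else is either inherited combinatorics or the elementary fact that Laurent polynomial rings over a DVR are regular.
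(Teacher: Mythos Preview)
The paper does not prove this statement at all: Theorem~\ref{res} is simply quoted from \cite{COX} as background, with no argument given. So there is no ``paper's own proof'' to compare against for this item. What the paper \emph{does} do is use this classical (over-a-field) result in the subsequent unnumbered theorem to build a mixed characteristic resolution, by performing the same sequence of monomial blowups over $W$ and then checking smoothness of $Y$ chart by chart via the two fibers $\mathcal{O}_{Y/p}$ and $\mathcal{O}_{Y[1/p]}$.

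Your proposal in effect merges those two steps: you invoke the combinatorial part of \cite{COX} Theorem~11.1.9 verbatim (correctly noting it is independent of the base ring), and then verify the fourth bullet directly over $W$. Compared with the paper's argument for the mixed characteristic resolution, your route to regularity is more direct---rather than reducing to the two fibers, you observe that a smooth cone gives an affine chart that is a (localized) polynomial ring over the regular ring $W$---and your route to properness/projectivity via the factorization into blowups is exactly the mechanism the paper relies on as well (through the cited Proposition~3.3.15). One small inaccuracy: for a smooth cone $\tau$ of dimension $k$, the chart $W[\tau^{\vee}\cap M]$ is $W[x_{1},\ldots,x_{k},x_{k+1}^{\pm 1},\ldots,x_{d}^{\pm 1}]$, not a Laurent polynomial ring in all $d$ variables; this does not affect your conclusion, since such a ring is still regular.
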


\begin{theorem}
Let $X$ be an mixed characteristic toric scheme of the form $X = \Spec R = \Spec W[\sigma^{\vee} \cap M]$. There exists a resolution of singularities $\mu: Y \rightarrow X$ such that $Y[1/p]$ is a toric resolution of singularities of $X[1/p]$.
\end{theorem}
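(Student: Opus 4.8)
The plan is to mirror the equal-characteristic construction combinatorially and then verify, using the toric dictionary over the DVR $W$, that the resulting morphism is a genuine resolution. Write $\Sigma$ for the fan whose cones are $\sigma$ and all of its faces, so that $X = X(\Sigma)$. The combinatorial data of $\Sigma$ does not see the base ring, so Theorem \ref{res} produces a smooth refinement $\Sigma'$ together with a finite sequence of star subdivisions
$$
\Sigma = \Sigma_{0} \longrightarrow \Sigma_{1} \longrightarrow \cdots \longrightarrow \Sigma_{N} = \Sigma',
$$
where $\Sigma'$ is smooth and contains every smooth cone of $\Sigma$. I would then set $Y := X(\Sigma')$ over $W$ and let $\mu : Y \to X$ be the composite of the induced toric morphisms.

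First I would check that each step $X(\Sigma_{i+1}) \to X(\Sigma_{i})$ is the monomial blowup one expects: a star subdivision at a primitive generator $v$ corresponds to blowing up the orbit closure attached to the minimal cone of $\Sigma_{i}$ containing $v$, and this orbit closure is cut out by a monomial ideal. Since such a blowup is the relative $\proj$ of the Rees algebra of a monomial ideal, it is compatible with base change along $W \to W[1/p]$ (and along $W \to W/p$); in particular $X(\Sigma_{i}) \times_{W} W[1/p]$ is the toric variety of $\Sigma_{i}$ over the field $W[1/p]$, and similarly over the special fiber. This is the star-subdivision/blowup dictionary of \cite{COX} reread over a DVR base, and it is where I expect the bulk of the routine-but-necessary verification to lie; it is also the main obstacle, since it requires establishing the toric dictionary over $W$ rather than over a field.

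Next I would establish that $\mu$ is a resolution. Regularity of $Y$: an affine chart of $X(\Sigma')$ corresponding to a smooth $k$-dimensional cone $\tau \in \Sigma'$ is $\Spec W[x_{1}^{\pm 1},\dots,x_{d-k}^{\pm 1}, x_{d-k+1},\dots,x_{d}]$, a localization of a polynomial ring over the regular ring $W$, hence regular; so $Y$ is regular. Properness of $\mu$ follows because each star subdivision gives a projective morphism and a finite composite of projective morphisms is proper. Birationality follows because every $X(\Sigma_{i})$ contains the dense open $\Spec W[M] \cong \mathbb{G}_{m}^{d}\times_{\mathbb{Z}}\Spec W$ and the transition morphisms restrict to the identity there. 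Finally, since $\Sigma'$ contains every smooth cone of $\Sigma$, and the non-regular locus of $X(\Sigma)$ is exactly the union of the orbit closures of the non-smooth cones of $\Sigma$ (again visible from the local charts above, independently of the base), $\mu$ is an isomorphism over the regular locus of $X$. Hence $\mu : Y \to X$ is a resolution of singularities.

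It then remains to identify the generic fiber. By the base-change compatibility recorded above, $Y[1/p] = X(\Sigma')\times_{W}W[1/p]$ is the toric variety associated to $\Sigma'$ over the field $W[1/p]$, $X[1/p] = X(\Sigma)\times_{W}W[1/p]$ is that of $\Sigma$, and $\mu[1/p]$ is the toric morphism induced by the refinement $\Sigma' \to \Sigma$ via the very same sequence of star subdivisions. Applying Theorem \ref{res} over the field $W[1/p]$ shows that $Y[1/p] \to X[1/p]$ is a toric resolution of singularities, which completes the argument. Once the dictionary of the second paragraph is in place — smooth fans give regular total spaces over $W$, and monomial star subdivisions are scheme-theoretic blowups commuting with passage to the fibers — everything else is formal.
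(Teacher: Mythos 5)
Your proposal is correct and follows essentially the same route as the paper: perform the combinatorial star-subdivision sequence from Theorem \ref{res} over $W$ and verify the result is a resolution whose generic fiber is the toric resolution of $X[1/p]$. The only (minor) difference is that you check regularity of $Y$ directly from the affine charts of smooth cones over $W$, whereas the paper checks it fiberwise (near $p$ via $\mathcal{O}_{Y}/p$, away from $p$ via $\mathcal{O}_{Y[1/p]}$); both verifications are valid.
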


\begin{proof}
Let $X' =  \Spec R[1/p]$. Then by proposition \ref{res} there exists a toric resolution of singularities $\mu': Y' \rightarrow X'$ via a sequence of monomial blowups. Perform this same sequence of blowups on $X$ to obtain $Y$ with $\mu: Y \rightarrow X$. Since the sequence of blowups used to obtain $Y$ does not involve $p$ we have that $Y$ is precisely the mixed characteristic toric scheme defined by the combinatorial fan obtained in equal characteristic. We check can check that $Y$ is smooth locally, on any toric affine chart, near $p$, it is nonsingular because $\mathcal{O}_{Y/p}$ is, away from $P$ it is nonsingular because $\mathcal{O}_{Y[1/p]}$ is.  
\end{proof}

  \bibliographystyle{alpha}
 \bibliography{bib}

\end{document}